\def\N{{\mathbb{N}}}
\def\R{{\mathbb{R}}}
\def\dt{{\textrm{d}t}}
\def\dy{{\textrm{d}y}}
\def\d{{\textrm{d}}}
\newtheorem{theorem}{Theorem}
\newtheorem{proposition}{Proposition}
\newtheorem{corollary}{Corollary}
\newtheorem{definition}{Definition}
\newtheorem{problem}{Problem}
\title{Numerical Analysis for Iterative Filtering\\ with New Efficient Implementations Based on FFT}
\author{Antonio Cicone, Haomin Zhou}
\begin{document}

\maketitle

\begin{abstract}
Real life signals are in general non--stationary and non--linear. The development of methods able to extract their hidden features in a fast and reliable way is of high importance in many research fields.
In this work we tackle the problem of further analyzing the convergence of the Iterative Filtering method both in a continuous and a discrete setting in order to provide a comprehensive analysis of its behavior.

Based on these results we provide new ideas for efficient implementations of Iterative Filtering algorithm which are based on Fast Fourier Transform (FFT), and the reduction of the original iterative algorithm to a direct method.

\end{abstract}

\section{Introduction}\label{sec:Intro}

Most real life signals are non--stationary and non--linear. Standard techniques like Fourier or wavelet transform prove to be unable to capture properly their hidden features \cite{cicone2017dummies}. For this reason Huang et al. proposed in 1998 a new kind of algorithm, called Empirical Mode Decomposition (EMD) \cite{huang1998empirical}, which allows to unravel the hidden features of a non--stationary signal $s(x)$, $x\in\R$,  by iteratively decomposing it into a finite sequence of simple components, called Intrinsic Mode Functions (IMFs). Such IMFs fulfill two properties:
the number of extrema and the number of zero crossings must either equal or differ at most by one;
considering upper and lower envelopes connecting respectively all the local maxima and minima of the function, their mean has to be zero at any point.

The wide variety of applications of this technique, see for instance \cite{cicone2017spectral,cicone2016hyperspectral,cicone2017Geophysics,sfarra2019thermal} and references therein, testified also by the high number of citations\footnote{The original work by Huang et al. \cite{huang1998empirical} as received so far, by itself, more than 10000 unique citations, according to Scopus} of the original paper by Huang et al. \cite{huang1998empirical}, together with the difficulty in analyzing it mathematically has attracted many researchers over the last two decades. Many alternative methods have been proposed, see \cite{cicone2017multidimensional} and references therein. All of these newly proposed methods are based on optimization with the only exception of the Iterative Filtering (IF) method, proposed by Lin et al. in \cite{lin2009iterative}, which is based instead on iterations.

The mathematical analysis of IF has been tackled by several authors in the last few years \cite{cicone2016adaptive,huang2009convergence,wang2013convergence,wang2012iterative} even for 2D or higher dimensional signals \cite{cicone2017multidimensional}. However several problems regarding this technique are still unsolved. In particular it is not yet clear how the stopping criterion used to discontinue the calculations of the IF algorithm influences the decomposition. Furthermore all the aforementioned analyses focused on the convergence of IF when applied to the extraction of a single IMF from a given signal, the so called inner loop. Regarding the decomposition of all the IMFs contained in a signal, which is related to the outer loop convergence and potential finiteness of the decomposition itself, nothing has been said so far. In this work we further analyze the IF technique addressing these and other questions.

The rest of this work is organized as follows: in Section \ref{sec:ContinuousIF} we review the details and properties of the method in the continuous setting and we provide new results regarding its inner loop convergence in presence of a stopping criterion as well as the outer loop convergence and finiteness. In Section \ref{sec:Discrete} we address the convergence analysis in the discrete setting for both the inner and outer loop of the algorithm. Based on these results in Section \ref{sec:speedup} we propose new ideas to increase the efficiency of the Iterative Filtering algorithm.

\section{IF algorithm in the continuous setting}\label{sec:ContinuousIF}

The key idea behind this decomposition technique is separating simple oscillatory components contained in a signal $s(x)$, $x\in\R$, the so called IMFs, by approximating the moving average of $s$ and subtracting it from $s$ itself. The approximated moving average is computed by convolution of $s$ with a window/filter function $w$

\begin{definition}\label{def:window}
    A filter/window $w$ is a nonnegative and even function in $C^0\left([-L,\ L]\right)$, $L>0$, and such that $\int_\R w(z)\d z=\int_{-L}^{L} w(z)\d z=1$.
\end{definition}

We point out that the idea of iteratively subtracting the moving average comes from the Empirical Mode Decomposition (EMD) method \cite{huang1998empirical} where the moving average was computed as a local average between an envelope connecting the maxima and one connecting the minima of the signal under study. The use of envelopes in an iterative way is the reason why the EMD algorithm is still lacking a rigorous mathematical framework.

The pseudocode of IF is given in Algorithm \ref{algo:IF}
\begin{algorithm}
\caption{\textbf{Iterative Filtering} IMF = IF$(s)$}\label{algo:IF}
\begin{algorithmic}
\STATE IMF = $\left\{\right\}$
\WHILE{the number of extrema of $s$ $\geq 2$}
      \STATE $s_1 = s$
      \WHILE{the stopping criterion is not satisfied}
                  \STATE  compute the filter length $l_m$ for $s_{m}(x)$
                  \STATE  $s_{m+1}(x) = s_{m}(x) -\int_{-l_m}^{l_m} s_m(x+t)w_m(t)\dt$
                  \STATE  $m = m+1$
      \ENDWHILE
      \STATE IMF = IMF$\,\cup\,  \{ s_{m}\}$
      \STATE $s=s-s_{m}$
\ENDWHILE
\STATE IMF = IMF$\,\cup\,  \{ s\}$
\end{algorithmic}
\end{algorithm}
where $w_m(t)$ is a nonnegative and compactly supported window/filter with area equal to one and support in $[-l_m,\ l_m]$, where $l_m$ is called filter length and represents the half support length.

The IF algorithm contains two loops: the inner and the outer loop, the second and first while loop in the pseudocode respectively. The former captures a single IMF, while the latter produces all the IMFs embedded in a signal.

Assuming $s_1=s$, the key step of the algorithm consists in computing the moving average of $s_m$ as
\begin{equation}\label{eq:Mov_Average}
\mathcal{L}_m(s_m)(x)=\int_{-l_m}^{l_m} s_m(x+t)w_m(t)\dt,
\end{equation}
which represents the convolution of the signal itself with the window/filter $w_m(t)$.

The moving average is then subtracted from $s_m$ to capture the fluctuation part as
\begin{equation}\label{eq:flactuations}
\mathcal{M}_{m}(s_m)= s_m-\mathcal{L}_m(s_m)=s_{m+1}
\end{equation}

The first IMF, $\textrm{IMF}_1$, is computed repeating iteratively this procedure on the signal $s_m$, $m\in\N$, until a stopping criterion is satisfied, as described in the following section.

To produce the $2$-nd IMF we apply the same procedure to the remainder signal $r=s-\textrm{IMF}_1$.
Subsequent IMFs are produced iterating the previous steps.

The algorithm stops when $r$ becomes a trend signal, meaning it has at most one local extremum.

We observe that, even thought the algorithm allows potentially to recompute the filter length $l_m$ at every step of each inner loop, in practice we always compute the filter length only at the first step of an inner loop and then we keep it constant throughout the subsequent iterations. Hence $l_m=l_1=l$ for every $m\geq 1$.

Following \cite{lin2009iterative}, one possible way of computing the filter length $l$ is given by the formula
\begin{equation}\label{eq:Unif_Mask_length}
l:=2\left\lfloor\nu \frac{N}{k}\right\rfloor
\end{equation}
where $N$ is the total number of sample points of a signal $s(x)$, $k$ is the number of its extreme points, $\nu$ is a tuning parameter usually fixed around 1.6, and $\left\lfloor \cdot \right\rfloor$ rounds a positive number to the nearest integer closer to zero. In doing so we are computing some sort of average highest frequency contained in $s$.

Another possible way could be the calculation of the Fourier spectrum of $s$ and the identification of its highest frequency peak. The filter length $l$ can be chosen to be proportional to the reciprocal of this value.

The computation of the filter length $l$ is an important step of the IF technique. Clearly, $l$ is strictly positive and, more importantly, it is based solely on the signal itself. This last property makes the method nonlinear.

In fact, if we consider two signals $p$ and $q$ where $p\neq q$, assuming $\textrm{IMFs}(\bullet)$ represent the decomposition of a signal into IMFs by IF, the fact that we choose the half support length based on the signal itself implies that in general

$$\textrm{IMFs}(p+q)\neq \textrm{IMFs}(p)+\textrm{IMFs}(q)$$

Regarding the convergence analysis of the Iterative Filtering inner loop we recall here the following theorem

\begin{theorem}[Convergence of the Iterative Filtering method \cite{cicone2016adaptive,huang2009convergence}]\label{thm:theo_1}
Given the filter function $w(t), t\in[-l,l]$ be $L^2$, symmetric, nonnegative, $\int_{-l}^l w(t)\dt = 1$ and let $s(x)\in L^2(\mathbb{R})$. \newline
If $|1- \widehat{w}(\xi)| < 1 $ or $\widehat{w}(\xi)=0$, where $\widehat{w}(\xi)$ is the Fourier transform of $w$ computed at the frequency $\xi$,

Then
$\{\mathcal{M}^m(s)\}$ converges and
\begin{equation}\label{eq:IMF_cont}
\textrm{IMF}_1 = \lim\limits_{m\rightarrow \infty}{\mathcal{M}^m(s)(x)}= \int_{-\infty}^{\infty} \widehat{s}(\xi) \chi_{\{\widehat{w}(\xi)=0 \}}
 e^{2\pi i \xi x} \textrm{d}\xi
\end{equation}
\end{theorem}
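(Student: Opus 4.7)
The plan is to move the whole problem to the Fourier side, where the fluctuation operator $\mathcal{M}$ becomes a simple multiplication, and then to read off the limit from elementary pointwise behavior of $(1-\widehat{w}(\xi))^m$.

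First I would rewrite one step of the inner loop via the convolution theorem: since $\mathcal{L}(s)(x) = (w * s)(x)$ and $w \in L^1(\R)$ (as a nonnegative, compactly supported window integrating to $1$), Young's inequality keeps $\mathcal{L}(s) \in L^2(\R)$, and Plancherel gives
\begin{equation*}
\widehat{\mathcal{M}(s)}(\xi) = \widehat{s}(\xi) - \widehat{w}(\xi)\,\widehat{s}(\xi) = \bigl(1-\widehat{w}(\xi)\bigr)\,\widehat{s}(\xi).
\end{equation*}
Iterating $m$ times, which is legitimate because each $\mathcal{M}^k(s)$ stays in $L^2$, I obtain
\begin{equation*}
\widehat{\mathcal{M}^m(s)}(\xi) = \bigl(1-\widehat{w}(\xi)\bigr)^m \widehat{s}(\xi).
\end{equation*}

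Next I would take the pointwise limit in $\xi$. Note that $\widehat{w}$ is a bounded continuous real-valued function with $\widehat{w}(0)=1$ and $|\widehat{w}(\xi)|\le 1$ by the fact that $w\ge 0$ and $\int w = 1$; in particular the hypothesis $|1-\widehat{w}(\xi)|<1$ or $\widehat{w}(\xi)=0$ implies $|1-\widehat{w}(\xi)|\le 1$ everywhere. Hence
\begin{equation*}
\bigl(1-\widehat{w}(\xi)\bigr)^m \;\longrightarrow\; \chi_{\{\widehat{w}(\xi)=0\}}(\xi) \qquad \text{as } m\to\infty,
\end{equation*}
since the factor is $1$ exactly where $\widehat{w}=0$, and tends to $0$ wherever $|1-\widehat{w}(\xi)|<1$. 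Multiplying by $\widehat{s}(\xi)$ preserves this pointwise convergence.

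Then I would upgrade this to $L^2$ convergence by dominated convergence: $|(1-\widehat{w}(\xi))^m \widehat{s}(\xi)|^2 \le |\widehat{s}(\xi)|^2 \in L^1(\R)$, so
\begin{equation*}
\widehat{\mathcal{M}^m(s)} \;\longrightarrow\; \widehat{s}\cdot \chi_{\{\widehat{w}=0\}} \qquad \text{in } L^2(\R),
\end{equation*}
and by Plancherel the same convergence holds for $\mathcal{M}^m(s)$ toward the inverse Fourier transform of $\widehat{s}\cdot\chi_{\{\widehat{w}=0\}}$, which is precisely the right-hand side of \eqref{eq:IMF_cont}.

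The only subtle point I would flag is the dichotomy in the hypothesis: one must check that, at every $\xi$, exactly one of the two mutually exclusive situations occurs, i.e.\ we never face $|1-\widehat{w}(\xi)|=1$ with $\widehat{w}(\xi)\neq 0$. Under the stated assumption this is excluded by hypothesis, which is what makes the pointwise limit clean and justifies passing a characteristic function through the limit. Everything else is standard Fourier-analytic bookkeeping, so I expect this dichotomy, and the implicit claim that $\{\widehat{w}=0\}$ is a measurable set (which is immediate from continuity of $\widehat{w}$), to be the main conceptual points rather than a technical obstacle.
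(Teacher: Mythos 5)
Your argument is correct and is essentially the standard one: the paper itself recalls this theorem from \cite{cicone2016adaptive,huang2009convergence} without reproving it, but the Fourier-multiplier identity $\widehat{\mathcal{M}^m(s)}(\xi)=(1-\widehat{w}(\xi))^m\widehat{s}(\xi)$ you rely on is exactly the mechanism the paper uses in its own proof of Theorem \ref{thm:IF_inner_conv_stopping}, and your dominated-convergence step (using $|1-\widehat{w}(\xi)|\le 1$ and $|\widehat{s}|^2\in L^1$) is the same route taken in the cited references. No gaps; the dichotomy you flag is indeed the only point needing care, and you handle it correctly.
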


We observe here that given $h:[-\frac{l}{4},\frac{l}{4}]\rightarrow\R$, $z\mapsto h(z)$, nonnegative, symmetric, with $\int_\R h(z)\d z=\int_{-\frac{l}{4}}^{\frac{l}{4}} h(z)\d z=1$, if we construct the window $w_1$ as the convolution of $h$ with itself and we fix $w_m=w_1$ throughout all the steps $m$ of an inner loop, then the method converges for sure to the limit function \eqref{eq:IMF_cont} which depends only on the shape of the filter function chosen and the support length selected by the method \cite{cicone2016adaptive,cicone2017spectral}.

In general we can assume that the filter functions $w_m(u)$ are defined as some scaling of an a priori fixed filter shape $w:[-1,1]\rightarrow\R$. In particular we define the scaling function
\begin{equation}\label{eq:scalingFunc}
 g_m:[-1,1]\rightarrow [-l_m,l_m],\qquad  t\mapsto u=g_m(t),
\end{equation}
where $g_m$ is assumed to be invertible and monotone, such that $w_m(u)=C_m w(g_m^{-1}(u))=C_m w(t)$, where $t=g_m^{-1}(u)$, $u=g_m(t)$ and $C_m$ is a scaling coefficient which is required to ensure that $\int_\R w_m(u)\d u=\int_{-l_m}^{l_m} w_m(u)\d u=1$.

Regarding the computation of the scaling coefficient $C_m$, from the observation that $\d u = g_m'(t) \d t$, it follows that

\begin{equation}\label{eq:C_m}
\int_{-l_m}^{l_m} w_m(u)\d u=\int_{-l_m}^{l_m} C_m w(g_m^{-1}(u))\d u=C_m \int_{-1}^{1} w(t) |g_m'(t)| \d t
\end{equation}

hence

\begin{equation}\label{eq:C_x2}
C_m= \frac{1}{\int_{-1}^{1} w(t) |g_m'(t)| \d t}
\end{equation}

and

\begin{equation}\label{eq:w_m}
w_m(u)=C_m w(g_m^{-1}(u))=\frac{w(g_m^{-1}(u))}{\int_{-1}^{1} w(t) |g_m'(t)| \d t}
\end{equation}

As an example of a scaling function we can consider, for instance, linear or quadratic scalings: $g_m(t)=l_m t$ and $g_m(t)=l_m t^2$ respectively.

In the case of linear scaling we have that $g_m^{-1}(u)=\frac{u}{l_m}$, $g'_m(t)=l_m\geq 0$, for every $t\in\R$, and $C_m= \frac{1}{l_m}$. Hence

\begin{equation}\label{eq:w_m_linear}
w_m(u)=\frac{w\left(\frac{u}{l_m}\right)}{l_m}
\end{equation}

\subsection{IF inner loop convergence in presence of a stopping criterion}\label{sec:Stopping}

In Algorithm \ref{algo:IF} the inner loop has to be iterated infinitely many times. In numerical computations, however, some stopping criterion has to be introduced. One possible stopping criterion follows from the solution of
\begin{problem}\label{pb:IF_num_conv}
For a given $\delta > 0$ we want to find the value $N_0\in\N$ such that
 $$\|\mathcal{M}^N(s)(x)-\mathcal{M}^{N+1}(s)(x)\|_{L^2}<\delta \qquad \forall N\geq N_0$$
\end{problem}

Applying the aforementioned stopping criterion, the inner loop of  Algorithm \ref{algo:IF} converges in finite steps to an IMF whose explicit form is given in the following theorem where $\widehat{s}(\xi)$ represents the Fourier transform of $s$ at frequency $\xi$.

\begin{theorem}\label{thm:IF_inner_conv_stopping}
Given $s\in L^2(\R)$ and $w$ obtained as the convolution $\widetilde{w}\ast \widetilde{w}$, where $\widetilde{w}$ is a filter/window, Definition \ref{def:window}, and fixed $\delta>0$.

Then, for the minimum $N_0\in\N$ such that the following inequality holds true

\begin{equation}\label{eq:N0}
    \frac{N_0^{N_0}}{(N_0+1)^{N_0+1}}<\frac{\delta}{\left\|\widehat{s}(\xi)\right\|_{L^2}} \quad \forall \xi\in\R
\end{equation}
we have that $\left\| \mathcal{M}^N(s)(x)-\mathcal{M}^{N+1}(s)(x)\right\|_{L^2}<\delta \quad \forall N\geq N_0$ and the first IMF is given by

\begin{equation}\label{eq:IMF_IF_stop}
\textrm{IMF}_1^\textrm{SC}=\mathcal{M}^N(s)(x)=\int_{\R} (1-\widehat{w}(\xi))^N \widehat{s}(\xi) e^{2\pi i \xi x}\d \xi \quad \forall N\geq N_0
\end{equation}
\end{theorem}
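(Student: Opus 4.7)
My plan is to work on the Fourier side throughout. Since $\mathcal{M}(s)=s-s\ast w$, the Fourier multiplier of a single iteration is $1-\widehat{w}(\xi)$, and a trivial induction on $m$ gives
\begin{equation*}
\widehat{\mathcal{M}^m(s)}(\xi)=(1-\widehat{w}(\xi))^m\,\widehat{s}(\xi),
\end{equation*}
which immediately yields the explicit formula \eqref{eq:IMF_IF_stop} by inverse Fourier transform. Subtracting the $(N+1)$-st iterate from the $N$-th produces the symbol $(1-\widehat{w}(\xi))^N\widehat{w}(\xi)$ acting on $\widehat{s}$, so by Plancherel the stopping estimate reduces to a uniform pointwise bound on this symbol multiplied by $\|\widehat{s}\|_{L^2}$.

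The role of the hypothesis $w=\widetilde{w}\ast\widetilde{w}$ is precisely to pin down the range of $\widehat{w}$. Since $\widehat{w}(\xi)=\widehat{\widetilde{w}}(\xi)^{2}$, it is nonnegative; and because $\widetilde{w}$ is a window (nonnegative, integral $1$), one has $|\widehat{\widetilde{w}}(\xi)|\leq\widehat{\widetilde{w}}(0)=1$, so $\widehat{w}(\xi)\in[0,1]$ for every $\xi\in\R$. The problem therefore reduces to the one-variable maximization of $\varphi_N(y):=(1-y)^N y$ on $[0,1]$. A routine differentiation produces the unique interior critical point $y^{\ast}=1/(N+1)$ with maximum value $N^N/(N+1)^{N+1}$. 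Evaluating the symbol pointwise in $\xi$ and invoking Plancherel then gives
\begin{equation*}
\bigl\|\mathcal{M}^N(s)-\mathcal{M}^{N+1}(s)\bigr\|_{L^2}\leq\frac{N^N}{(N+1)^{N+1}}\,\|\widehat{s}\|_{L^2},
\end{equation*}
so condition \eqref{eq:N0} at $N=N_0$ drives the right-hand side below $\delta$.

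To upgrade from $N=N_0$ to every $N\geq N_0$, I would check that $a_N:=N^N/(N+1)^{N+1}$ is monotonically decreasing: writing $\log a_N=N\log N-(N+1)\log(N+1)$ and differentiating the continuous extension gives $\log\bigl(N/(N+1)\bigr)<0$, which suffices. The main obstacle, or rather the only step on which the argument genuinely turns, is the pointwise control $\widehat{w}(\xi)\in[0,1]$; this is exactly the point at which the convolution-square structure of $w$ is used, and it is what legalizes the elementary maximization. Everything else is bookkeeping with the Fourier multiplier, Plancherel, and a one-line monotonicity check, so the writeup should be essentially linear once these three ingredients are in place.
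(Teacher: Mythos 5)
Your argument is correct and follows essentially the same route as the paper's proof: pass to the Fourier side, identify the multiplier $(1-\widehat{w}(\xi))^N\widehat{w}(\xi)$ for the difference of consecutive iterates, use $\widehat{w}(\xi)\in[0,1]$ (which the paper cites from prior work but you derive directly from $\widehat{w}=\widehat{\widetilde{w}}^{\,2}$), maximize $(1-y)^N y$ on $[0,1]$, and apply Plancherel. Your explicit monotonicity check for $N^N/(N+1)^{N+1}$ is a small piece of bookkeeping the paper leaves implicit, but it does not change the approach.
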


\begin{proof}
From the hypotheses on the filter $w$ it follows that its Fourier transform is in the interval $[0,\ 1]$, see \cite{cicone2016adaptive}. Furthermore from the linearity of the Fourier transform it follows that

$${\widehat{\mathcal{M}^N(s)(x)}(\xi)}= (1- \widehat{w}(\xi))^N \widehat{s}(\xi)=\left\{
                                                                              \begin{array}{cc}                                                                                 \widehat{s}(\xi) &  \textrm{ if } \widehat{w}(\xi)=0\\
                                                                               (1- \widehat{w}(\xi))^N \widehat{s}(\xi)  & \textrm{ if } |1- \widehat{w}(\xi)|  < 1\\
                                                                              \end{array}
                                                                            \right.$$

since the Fourier Transform is a unitary operator, by the Parseval's Theorem, it follows that
$$\left\| \mathcal{M}^N(s)(x)-\mathcal{M}^{N+1}(s)(x)\right\|_{L^2} =\left\| \widehat{\mathcal{M}^N(s)(x)}(\xi)-\widehat{\mathcal{M}^{N+1}(s)(x)}(\xi)\right\|_{L^2}$$
$$=
\left\|(1-\widehat{w}(\xi))^N \left[1 -(1-\widehat{w}(\xi))\right]\widehat{s}(\xi)\right\|_{L^2}=
\left\|(1-\widehat{w}(\xi))^N \widehat{w}(\xi)\widehat{s}(\xi)\right\|_{L^2}
$$

We point out that this formula can also be interpreted as the $L^2$--norm of the moving average of $\mathcal{M}^N$ which is given by the convolution $\mathcal{M}^N\ast w$.

For a fixed $N$ we can compute the maximum of the function $(1-\widehat{w}(\xi))^N \widehat{w}$, for $\widehat{w}\in[0,\ 1]$, that is attained for $\widehat{w}(\xi)=\frac{1}{N+1}$. Therefore
$$\left\|(1-\widehat{w}(\xi))^N \widehat{w}(\xi)\widehat{s}(\xi)\right\|_{L^2}\leq \left\|\left(1-\frac{1}{N+1}\right)^N\frac{1}{N+1}\widehat{s}(\xi)\right\|_{L^2}$$
$$=\left\|\frac{N^N}{(N+1)^{N+1}}\widehat{s}(\xi)\right\|_{L^2}<\delta$$
Hence we consider the smallest $N_0\in\N$ such that
$$\frac{N_0^{N_0}}{(N_0+1)^{N_0+1}}<\frac{\delta}{\left\|\widehat{s}(\xi)\right\|_{L^2}}$$
\end{proof}

Equation \eqref{eq:IMF_IF_stop} provides a valuable insight on how the implemented algorithm is actually decomposing a signal into IMFs.
We recall that without any stopping criterion each IMF of a signal $s$ is given by the inverse Fourier transform of $\widehat{s}$ computed at the frequencies corresponding to zeros of $\widehat{w}$, as stated in \eqref{eq:IMF_cont}.

Therefore, from the observation that $\widehat{w}$ is a function not compactly supported and with isolated zeros, the IMFs produced with IF are given by the summation of pure and well separated tones.

Whereas, when we enforce a stopping criterion, we end up producing IMFs containing a much richer spectrum. In fact from \eqref{eq:IMF_IF_stop} we discover that an IMF is now given by the inverse Fourier transform of $\widehat{s}$ computed at every possible frequency in $\R$, each multiplied by the coefficient $(1-\widehat{w}(\xi))^N$. Since, by construction, $0 \leq \widehat{w}(\xi)\leq 1$, $\forall \xi\in\R$, then $(1-\widehat{w}(\xi))^N$ is equal to 1 if and only if  $\widehat{w}(\xi)=0$, whereas for all the other frequencies it is smaller than 1 and it tends to zero as $N$ grows. The $(1-\widehat{w}(\xi))^N$ quantity represents in practice the percentage with which each frequency is contained in the reconstruction of an IMF from the Fourier transform of the original signal. The higher is the number of iterations $N$ the narrower are the intervals of frequencies that are almost completely captured in each IMF. And as $N\rightarrow\infty$ such intervals coalesce into isolated points corresponding to the zeros of $\widehat{w}$.

\subsubsection{Convergence with a threshold}

We start recalling a few properties regarding the filter functions $w$. Assuming $w(x)$, $x\in\R$, is a filter function supported on $(-1,\ 1)$, if we use the linear scaling described in \eqref{eq:w_m_linear}, then we can construct
\begin{equation}\label{eq:w^a}
w^a(x)=\frac{1}{a} w\left(\frac{x}{a}\right)
\end{equation}
where $w^a(x)$ is supported on $(-a,\ a)$.

If we define $\widehat{w}(\xi)=\int_{-\infty}^{+\infty} w(x) e^{-i\xi x 2 \pi} \d x$, then
\begin{equation}\label{eq:fft_w^a}
\widehat{w^a}(\xi)=\int_{-\infty}^{+\infty} \frac{1}{a}w\left(\frac{x}{a}\right) e^{-i\xi \frac{x}{a} a 2 \pi} \d x=\widehat{w}(a\xi)
\end{equation}
Therefore, if $\xi_0$ is a root of $\widehat{w}(\xi)=0$, then $\frac{\xi_0}{a}$ is a root of $\widehat{w^a}(\xi)=0$ because $\widehat{w^a}\left(\frac{\xi_0}{a}\right)=\widehat{w}\left(a\frac{\xi_0}{a}\right)=\widehat{w}(\xi_0)=0$.

We remind that, since $w$ are compactly supported functions, their Fourier transform are defined on $\R$ and they have zeros which are isolated points.

Given $0 < \gamma < 1$, we identify the set
\begin{equation}\label{eq:I_gamma}
I_{w,\gamma, N}=\left\{\xi\in\R\  :\ \widehat{w}(\xi) \leq 1 - \sqrt[N]{1-\gamma}\right\}.
\end{equation}
As $N\rightarrow\infty$ the quantity $1 - \sqrt[N]{1-\gamma}\rightarrow 0$, therefore $I_{w,\gamma, N}$ coalesces into isolated points corresponding to the zeros of $\widehat{w}$.

If we consider filters like the Fokker-Planck filters \cite{cicone2016adaptive} or any filter with smooth finite support properties we must have that, for a fixed $N\in\N$ and $\gamma>0$, there exists $\Xi_0>0$ such that

\begin{equation}\label{eq:Xi_0}
\widehat{w}(\xi) \leq 1 - \sqrt[N]{1-\gamma} < 1 \qquad \textrm{ for all }\ |\xi|\geq \Xi_0
\end{equation}

In fact, since $\int|w(x)|^2\d x < + \infty$ with $w(x)$ smooth function, then $\int|\widehat{w}(\xi)|^2\d \xi < + \infty$ which implies that $\widehat{w}(\xi)$ decays as $|\xi|\rightarrow \infty$.

So for a filter $w$ with smooth finite support properties the set $I_{w,\gamma, N}$ is made up of a finite number of disjoint compact intervals, containing zeros of $\widehat{w}$, together with the intervals $(-\infty,\ -\Xi_0]$ and $[\Xi_0,\ \infty)$.

Furthermore if we scale these filters using a linear scaling with coefficient $a>1$ it follows from the previous observations that $\Xi_0\rightarrow 0$ and, as a consequence, $I_{w,\gamma, N}$ converges to $\R\backslash \{0\}$.

As an example of a compactly supported filter we can consider the triangular filter function
\begin{equation}\label{eq:triangular}
    w(x) = \left\{
             \begin{array}{cc}
               \frac{1}{L}-\frac{1}{L^2}|x| & \textrm{for} \; |x|\leq L\\
               0 & \textrm{otherwise} \\
             \end{array}
           \right.
\end{equation}
whose Fourier transform is
\begin{equation}\label{eq:triangular_fourier}
    \widehat{w}(\xi) = \frac{1}{L}\frac{\sin^2\left(L\pi \xi\right)}{\left(\pi \xi\right)^2}.
\end{equation}
The triangular filter and its Fourier transform are depicted in Fig. \ref{fig:Triangle_filter}

\begin{figure}%
    \centering
    \subfloat{{\includegraphics[width=0.48\textwidth]{./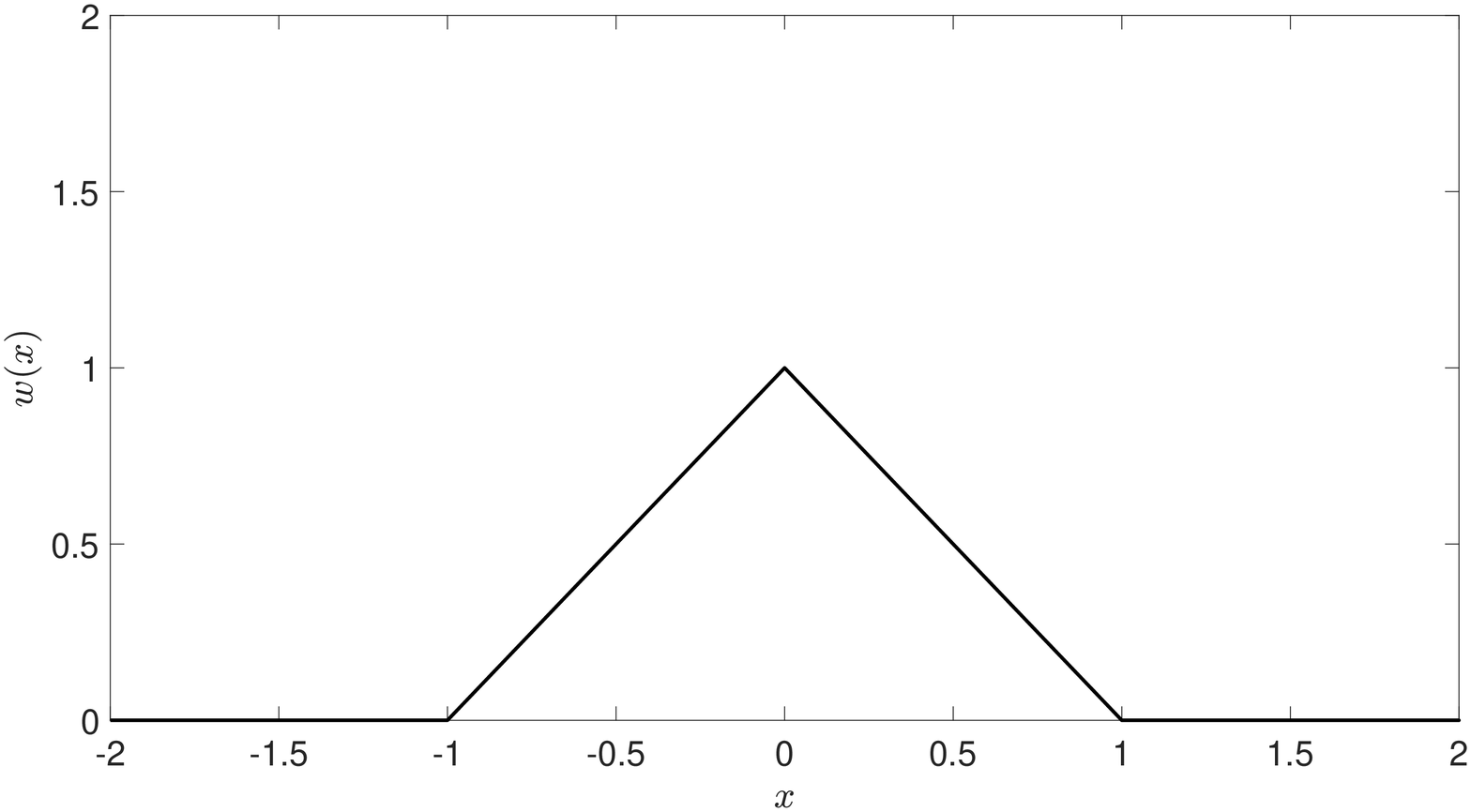} }}%
    ~
    \subfloat{{\includegraphics[width=0.48\textwidth]{./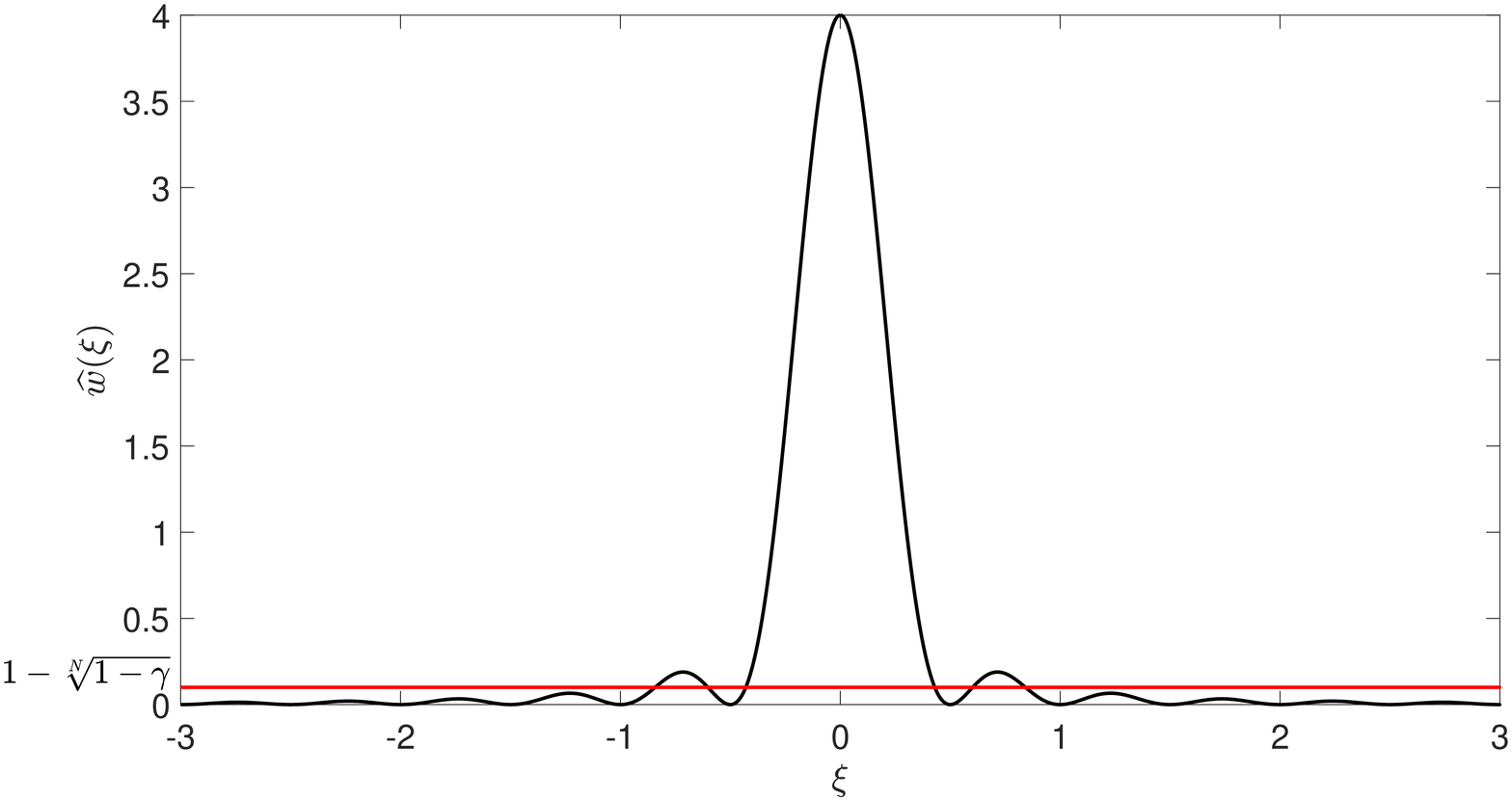} }}%
    \caption{ Left panel, triangular filter \eqref{eq:triangular} with $L=1$. Right panel, in black the Fourier transform \eqref{eq:triangular_fourier} and in red the threshold value $1 - \sqrt[N]{1-\gamma}$}\label{fig:Triangle_filter}
\end{figure}

Given the threshold value  $1 - \sqrt[N]{1-\gamma}$ depicted in the right panel of Fig. \ref{fig:Triangle_filter} and the triangular filter \eqref{eq:triangular} with $L=1$, the set $I_{w,\gamma, N}$ is made up of four intervals: two compactly supported and centered around $1/2$ and $-1/2$, and other two starting around $0.8$ and $-0.8$ and ending at infinity and minus infinity, respectively.

We can use the threshold value $1 - \sqrt[N]{1-\gamma}$ in the computation of an IMF as follows: given \eqref{eq:IMF_IF_stop}, whenever $(1-\widehat{w}(\xi))^N \geq 1-\gamma$, we substitute $\widehat{w}(\xi)$ with zero. This is equivalent to setting $\widehat{w}(\xi)=0$ whenever $\xi\in I_{w,\gamma, N}$.

Therefore, using the previously described thresholding and based on Theorem \ref{thm:IF_inner_conv_stopping}, Algorithm \ref{algo:IF} converges to
\begin{equation}\label{eq:IMF_cont_stop_threshold}
    \textrm{IMF}_1^\textrm{TH} =
    \int_{\R\backslash I_{w,\gamma, N}} (1-\widehat{w}(\xi))^N \widehat{s}(\xi) e^{2\pi i \xi x}\d \xi + \int_{I_{w,\gamma, N}} \widehat{s}(\xi) e^{2\pi i \xi x}\d \xi \quad \forall N\geq N_0
\end{equation}
where $I_{w,\gamma, N}$ is defined in \eqref{eq:I_gamma}.

We are now ready to prove the following

\begin{proposition}
Assuming that all the hypotheses of Theorem \ref{thm:IF_inner_conv_stopping} are fulfilled, then for every $\epsilon>0$ there exist a stopping criterion value $\delta>0$ and a threshold $0<\gamma<1$ such that
\begin{equation}\label{eq:NormInequalities_TH}
    \left\|\textrm{IMF}_1-\textrm{IMF}_1^\textrm{TH}\right\|\leq \frac{\epsilon}{2}, \qquad  \left\|\textrm{IMF}_1^\textrm{TH}-\textrm{IMF}_1^\textrm{SC}\right\|\leq \frac{\epsilon}{2}
\end{equation}
and
\begin{equation}\label{eq:NormInequality_SC}
    \left\|\textrm{IMF}_1-\textrm{IMF}_1^\textrm{SC}\right\|\leq \epsilon
\end{equation}
where $\textrm{IMF}_1$, $\textrm{IMF}_1^\textrm{SC}$, and $\textrm{IMF}_1^\textrm{TH}$ are defined in \eqref{eq:IMF_cont}, \eqref{eq:IMF_IF_stop}, and \eqref{eq:IMF_cont_stop_threshold} respectively.
\end{proposition}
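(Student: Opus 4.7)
The plan is to prove the two $\epsilon/2$ bounds in \eqref{eq:NormInequalities_TH} separately and then obtain \eqref{eq:NormInequality_SC} as an immediate triangle-inequality consequence. I would first move everything to the frequency side by Parseval, since the three relevant Fourier transforms have very explicit forms: $\widehat{\textrm{IMF}_1}(\xi)=\widehat{s}(\xi)\chi_{Z}(\xi)$ where $Z=\{\widehat{w}=0\}$; $\widehat{\textrm{IMF}_1^\textrm{SC}}(\xi)=(1-\widehat{w}(\xi))^N\widehat{s}(\xi)$; and $\widehat{\textrm{IMF}_1^\textrm{TH}}(\xi)$ equals $\widehat{s}(\xi)$ on $I_{w,\gamma,N}$ and $(1-\widehat{w}(\xi))^N\widehat{s}(\xi)$ on its complement. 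Note that $Z\subseteq I_{w,\gamma,N}$ for every $N\in\N$ and every $\gamma\in(0,1)$, since $0\leq 1-\sqrt[N]{1-\gamma}$.

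For the threshold-versus-stopping-criterion inequality, the two Fourier transforms coincide on $\R\setminus I_{w,\gamma,N}$, while on $I_{w,\gamma,N}$ their difference equals $[1-(1-\widehat{w}(\xi))^N]\widehat{s}(\xi)$. The defining property \eqref{eq:I_gamma} forces $(1-\widehat{w}(\xi))^N\geq 1-\gamma$ on that set, so the multiplier is at most $\gamma$ in modulus, and Parseval gives $\|\textrm{IMF}_1^\textrm{TH}-\textrm{IMF}_1^\textrm{SC}\|_{L^2}\leq \gamma\|s\|_{L^2}$ uniformly in $N$. Choosing $\gamma\leq \epsilon/(2\|s\|_{L^2})$ then handles this piece independently of the later choice of $\delta$.

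For the IMF-versus-threshold inequality, the same bookkeeping yields
\begin{equation*}
\|\textrm{IMF}_1-\textrm{IMF}_1^\textrm{TH}\|_{L^2}^2=\int_{I_{w,\gamma,N}\setminus Z}|\widehat{s}(\xi)|^2\,\d\xi+\int_{\R\setminus I_{w,\gamma,N}}(1-\widehat{w}(\xi))^{2N}|\widehat{s}(\xi)|^2\,\d\xi.
\end{equation*}
With the already-chosen $\gamma$ fixed, I would let $N\to\infty$ and apply dominated convergence to each term. Because $1-\sqrt[N]{1-\gamma}$ monotonically decreases to $0$, the family $\{I_{w,\gamma,N}\}$ decreases to $Z$, so $\chi_{I_{w,\gamma,N}\setminus Z}\to 0$ pointwise and the first integral $\to 0$; on $\R\setminus Z$ one has $\widehat{w}(\xi)>0$, hence $(1-\widehat{w}(\xi))^{2N}\to 0$ pointwise, and the second integral $\to 0$ as well. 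In both cases $|\widehat{s}|^2\in L^1(\R)$ supplies the required integrable dominant. Hence some $N_1\in\N$ makes the right-hand side at most $\epsilon^2/4$ whenever $N\geq N_1$.

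Finally I would tie the two choices together via \eqref{eq:N0}: since $N^N/(N+1)^{N+1}\to 0$ as $N\to\infty$, a sufficiently small $\delta>0$ forces the least admissible $N_0$ from Theorem \ref{thm:IF_inner_conv_stopping} to satisfy $N_0\geq N_1$. With this pair $(\gamma,\delta)$ both inequalities in \eqref{eq:NormInequalities_TH} hold, and \eqref{eq:NormInequality_SC} follows by the triangle inequality. The step I expect to be the main obstacle is the dominated-convergence passage for the second integral in the IMF-versus-threshold estimate: the domain $\R\setminus I_{w,\gamma,N}$ itself grows with $N$, so one must combine the monotone shrinking of $I_{w,\gamma,N}$ to $Z$ (guaranteed by the smooth decay of $\widehat{w}$ discussed around \eqref{eq:Xi_0}) with the pointwise bound $(1-\widehat{w}(\xi))^N\to 0$ off $Z$ to ensure the passage to the limit is legitimate uniformly on the shrinking complement.
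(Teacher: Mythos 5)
Your proof is correct and follows essentially the same route as the paper: the same triangle-inequality decomposition of $\left\|\textrm{IMF}_1-\textrm{IMF}_1^\textrm{SC}\right\|$ into the contributions from $I_{w,\gamma,N}\setminus\{\widehat{w}=0\}$, from $\R\setminus I_{w,\gamma,N}$, and from $I_{w,\gamma,N}$, each made small by tuning $\gamma$ and $N$ and then translating $N$ into a choice of $\delta$ via \eqref{eq:N0}. Your quantitative estimate $\left\|\textrm{IMF}_1^\textrm{TH}-\textrm{IMF}_1^\textrm{SC}\right\|_{L^2}\leq\gamma\|s\|_{L^2}$, uniform in $N$, is in fact sharper than the paper's qualitative limiting argument and decouples the choice of $\gamma$ from that of $N$ more cleanly, but it is a refinement of the same approach rather than a different one.
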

\begin{proof}
First of all we have that
$$\left\|\textrm{IMF}_1-\textrm{IMF}_1^\textrm{SC}\right\|   \leq  \left\|\textrm{IMF}_1-\textrm{IMF}_1^\textrm{TH}\right\| + \left\|\textrm{IMF}_1^\textrm{TH}-\textrm{IMF}_1^\textrm{SC}\right\|$$
where
\begin{eqnarray}
  \nonumber \left\|\textrm{IMF}_1-\textrm{IMF}_1^\textrm{TH}\right\| & \leq &  \left\|\int_\R \widehat{s}(\xi) \chi_{\left\{\xi\in\R\ |\ \widehat{w}(\xi)=0 \right\}} e^{2\pi i \xi x} \d \xi - \int_{\R\backslash I_{w,\gamma, N}} \!\!\!\!\!\!\!\!(1-\widehat{w}(\xi))^N\widehat{s}(\xi) e^{2\pi i \xi x}\d \xi - \int_{I_{w,\gamma, N}} \widehat{s}(\xi) e^{2\pi i \xi x}\d \xi \right\| \leq \\
  & & \left\|\int_{\R\backslash I_{w,\gamma, N}} \!\!\!\!\!\!\!\!(1-\widehat{w}(\xi))^N\widehat{s}(\xi) e^{2\pi i \xi x}\d \xi\right\| + \left\|\int_{I_{w,\gamma, N}\backslash \left\{\xi\in\R\ |\ \widehat{w}(\xi)=0 \right\}} \!\!\!\!\!\!\!\!\!\!\!\!\!\!\!\!\!\!\!\!\!\!\!\!\!\!\!\!\!\!\widehat{s}(\xi) e^{2\pi i \xi x}\d \xi \right\|
\end{eqnarray}
and
\begin{eqnarray}
  \nonumber \left\|\textrm{IMF}_1^\textrm{TH}-\textrm{IMF}_1^\textrm{SC}\right\| & \leq & \left\|\int_{\R\backslash I_{w,\gamma, N}} \!\!\!\!\!\!\!\!(1-\widehat{w}(\xi))^N\widehat{s}(\xi) e^{2\pi i \xi x}\d \xi + \int_{I_{w,\gamma, N}} \widehat{s}(\xi) e^{2\pi i \xi x}\d \xi - \int_\R (1-\widehat{w}(\xi))^N\widehat{s}(\xi) e^{2\pi i \xi x}\d \xi\right\| \leq \\
  & & \left\|\int_{I_{w,\gamma, N}} \left[1-(1-\widehat{w}(\xi))^N\right]\widehat{s}(\xi) e^{2\pi i \xi x}\d \xi \right\|
\end{eqnarray}

From \eqref{eq:I_gamma} and the fact that $\int_{I_{w,\gamma, N}} \widehat{s}(\xi) e^{2\pi i \xi x}\d \xi \rightarrow \int_{ \left\{\xi\in\R\ |\ \widehat{w}(\xi)=0 \right\}} \widehat{s}(\xi) e^{2\pi i \xi x}\d \xi$ as $\gamma\rightarrow 0$ or $N\rightarrow\infty$, it follows that there exist $N_1\in\N$ big enough and $0<\gamma_1<1$ small enough such that
$$\left\|\int_{I_{w,\gamma_1, N_1}\backslash \left\{\xi\in\R\ |\ \widehat{w}(\xi)=0 \right\}} \!\!\!\!\!\!\!\!\!\!\!\!\!\!\!\!\!\!\!\!\!\!\!\!\!\!\!\!\!\!\!\!\!\!\!\!\!\! \widehat{s}(\xi) e^{2\pi i \xi x}\d \xi \right\| \leq \frac{\epsilon}{4}$$

Furthermore there exist $0<\gamma_2<1$ small enough and a $N_2\in\N$ so that
$$\left\|\int_{I_{w,\gamma_2, N_2}} \!\!\!\!\!\!\!\!\!\!\!\!\!\!\!\!\left[1-(1-\widehat{w}(\xi))^{N_2}\right]\widehat{s}(\xi) e^{2\pi i \xi x}\d \xi \right\|\leq \frac{\epsilon}{2}$$
in fact as $\gamma_2\rightarrow 0$ the interval $I_{w,\gamma_2, N_2}$ tends to the set of frequencies corresponding to the zeros of $\widehat{w}(\xi)$.
Given $\gamma=\min\left\{\gamma_1,\ \gamma_2\right\}$, then there exists $N_3\in\N$ big enough such that $(1-\widehat{w}(\xi))^{N_3}$ is small enough in order to have
$$\left\|\int_{\R\backslash I_{w,\gamma, N}} \!\!\!\!\!\!\!\!\!\!\!\!\!\!\!\!(1-\widehat{w}(\xi))^{N_3}\widehat{s}(\xi) e^{2\pi i \xi x}\d \xi\right\|\leq \frac{\epsilon}{4}$$

If we consider $N_0=\max\left\{N_1,\ N_2,\ N_3\right\}$ there exists $\delta>0$ such that \eqref{eq:N0} holds true for every $N\geq N_0$.
\end{proof}

This proposition implies that $\textrm{IMF}_1^\textrm{TH}$ can be as close as we like to both $\textrm{IMF}_1^\textrm{SC}$ and $\textrm{IMF}_1$ if we choose wisely the stopping criterion value $\delta$ and the threshold $\gamma$.

\subsection{IF outer loop convergence}\label{subsec:outerLoopIF}

We do have now all the tools needed to study the Iterative Filtering outer loop convergence.

\begin{definition}[Significant IMFs with respect to $\eta>0$]
Fixed $\eta>0$ and given a signal $s$ and its decomposition in IMFs obtained using Algorithm \ref{algo:IF}, then we define \emph{significant IMFs} with respect to $\eta$ all the IMFs whose $L^\infty$-norm is bigger than $\eta$.
\end{definition}

\begin{theorem}\label{thm:IF_outer_conv}
Given a signal $s\in L^\infty(\R)$, whose continuous frequency spectrum is compactly supported with upper limit $B>0$ and lower limit $b>0$, and such that $\|\widehat{s}\|_{\infty} = c<\infty$, chosen a filter $w$ produced as convolution of a filter with itself, fixed $\delta>0$ and $\eta>0$.

\noindent Then the inner loop of Algorithm \ref{algo:IF} converges to \eqref{eq:IMF_IF_stop} and the outer loop produces only a finite number $M\in\N$ of significant IMFs whose norm is bigger than $\eta$.
\end{theorem}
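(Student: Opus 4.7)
My plan is to split the theorem into two independent pieces: (1) the inner-loop claim, which follows directly from Theorem~\ref{thm:IF_inner_conv_stopping} applied at each outer iteration, and (2) the outer-loop finiteness claim, which I would attack via a Fourier-side energy argument combined with a band-limited Cauchy--Schwarz estimate.

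For (1), I would verify that the hypotheses of Theorem~\ref{thm:IF_inner_conv_stopping} hold at every outer iteration $j$. Since $\widehat{s}$ is bounded and compactly supported, $s\in L^2(\R)$; moreover each residual $r_{j-1}$ has Fourier transform equal to $\widehat{s}$ multiplied by a factor in $[0,1]$, so $r_{j-1}\in L^2(\R)$ as well. The property $\widehat{w}\in[0,1]$ follows from $w$ being a self-convolution, and it is preserved under the linear rescaling used at each outer step, since $\widehat{w_j}(\xi)=\widehat{w}(l_j\xi)$. Hence the inner loop at stage $j$ converges to
\begin{equation*}
\widehat{\textrm{IMF}_j}(\xi)=(1-\widehat{w_j}(\xi))^{N_j}\,\widehat{r_{j-1}}(\xi)
\end{equation*}
for some finite $N_j$ satisfying the analogue of \eqref{eq:N0} for $r_{j-1}$.

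For (2), I set $a_i(\xi) := (1-\widehat{w_i}(\xi))^{N_i}\in[0,1]$ and iterate the Fourier-side recursion $\widehat{r_j}=(1-a_j)\widehat{r_{j-1}}$ to obtain the closed form
\begin{equation*}
\widehat{\textrm{IMF}_j}(\xi) \,=\, b_j(\xi)\,\widehat{s}(\xi), \qquad b_j(\xi) := a_j(\xi)\prod_{i<j}\bigl(1-a_i(\xi)\bigr).
\end{equation*}
A one-line telescoping computation gives $\sum_{j=1}^{M} b_j(\xi) = 1 - \prod_{j=1}^{M}\bigl(1-a_j(\xi)\bigr)\leq 1$, and since each $b_j(\xi)\in[0,1]$ this yields the pointwise bound $\sum_{j=1}^{M} b_j(\xi)^2 \leq 1$. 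Integrating against $|\widehat{s}(\xi)|^2$ and invoking Parseval produces the energy inequality $\sum_{j=1}^{M}\|\textrm{IMF}_j\|_{L^2}^2 \leq \|\widehat{s}\|_{L^2}^2 \leq 2(B-b)c^2$. Finally, because $\widehat{\textrm{IMF}_j}$ inherits the support of $\widehat{s}$, contained in $[-B,-b]\cup[b,B]$, Cauchy--Schwarz applied to the inverse Fourier integral gives $\|\textrm{IMF}_j\|_\infty\leq\sqrt{2(B-b)}\,\|\textrm{IMF}_j\|_{L^2}$. Consequently each significant IMF contributes at least $\eta^2/[2(B-b)]$ to the sum, yielding the explicit bound $M\leq 4c^2(B-b)^2/\eta^2<\infty$.

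The main obstacle I expect is conceptual rather than technical: the dynamically chosen filter lengths $l_j$ and stopping indices $N_j$ make any attempt to control the residuals $r_j$ pointwise in the Fourier variable quite delicate, so one should resist that route. The cleaner path is to never use the specific values of $l_j$ or $N_j$ and rely only on the algebraic identity $\sum_j b_j\leq 1$ together with the compact support of $\widehat{s}$. A minor subtlety is that the outer loop may terminate early because the residual becomes a trend with fewer than two extrema, but this only strengthens the bound on $M$ and poses no additional difficulty.
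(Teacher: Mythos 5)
Your proof is correct, and it takes a genuinely different route from the paper's. The paper argues geometrically on the frequency axis: it assumes the filter at outer step $k$ is chosen so that the first zero of $\widehat{w_k}$ sits at the current top frequency of the residual, shows that the thresholded set $I_{w_k,\gamma,\widetilde N_k}$ then covers a band $[B-r_k,\ B+r_k]$ with $r_k$ bounded below by some $\varepsilon>0$ (because the lower spectral bound $b$ prevents the filter support from being stretched indefinitely), and concludes that finitely many such bands cover $[b,\ B]$; the choice $\gamma<\eta/(2c(B-b))$ then controls $\left\|\textrm{IMF}_k^{\textrm{SC}}-\textrm{IMF}_k^{\textrm{TH}}\right\|_\infty$ and lets each component be classified as significant or not. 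Your argument never touches the filter-length selection: the telescoping identity $\sum_j b_j=1-\prod_j(1-a_j)\le 1$ with $b_j\in[0,1]$ gives $\sum_j b_j^2\le 1$, hence the Bessel-type energy bound $\sum_j\|\textrm{IMF}_j\|_{L^2}^2\le\|\widehat s\|_{L^2}^2\le 2(B-b)c^2$, and band-limitedness converts the $L^\infty$ significance threshold into an $L^2$ one via Cauchy--Schwarz. What your approach buys: it is shorter, it avoids the paper's informal step (the uniform lower bound $\varepsilon$ on the radii $r_k$ is asserted rather than derived and depends on unstated assumptions about how $l_k$ is selected), it is insensitive to the actual choices of $l_j$ and $N_j$, and it yields the explicit bound $M\le 4c^2(B-b)^2/\eta^2$. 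What it gives up: the covering argument, when its assumptions hold, says more --- it localizes each IMF in a frequency band and indicates that the outer loop itself halts after finitely many steps, not merely that finitely many of its outputs are significant (though the theorem as stated only claims the latter, so your proof suffices). Two small points to tidy: you overload $M$ as both the truncation index in the telescoping sum and the final count of significant IMFs, and you should state explicitly that $a_i(\xi)\in[0,1]$ because the rescaled self-convolved filter still satisfies $\widehat{w_i}(\xi)=\widehat w(l_i\xi)\in[0,1]$ (you note this only in the inner-loop part).
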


\begin{proof}
Let us consider the Fourier transform of the signal $s$. From the hypotheses it follows that $|\widehat s(\xi)| = 0$ for every $\xi \geq B$.

We can assume that Algorithm \ref{algo:IF} in the first step of its outer loop starts selecting a filter $w_1$ such that the zero of $\widehat{w_1}$ with smallest frequency is at $B$. We recall in fact that one of the possible way to choose the filter length is based on the Fourier transform of $s$, as explained in Section 2. Given $\delta>0$ we can identify $N_1\in\N$ such that \eqref{eq:N0} is fulfilled for every $N\geq N_1$.

Now, from the hypothesis that $\|\widehat{s}\|_{\infty} = c<\infty$ it follows there exists the upper bound $c$ on $\widehat s(\xi)$ uniformly on $\xi \in \R$. From the hypotheses on the filter function it follows that $0<\widehat{w_1}<1$, ref. end of Section 2 in \cite{cicone2016adaptive}. Furthermore, from the assumption on the lower bound $b$ and upper bound $B$ of the continuous frequency spectrum of $s$, the fact that $\left\| e^{2\pi i \xi x} \right\|_{\infty}\leq 1$ for every $x,\xi\in\R$, by definition of the interval $I_{w_1,\gamma, \widetilde{N}_1}$, and for every $\widetilde{N}_1\geq N_1$ and $0<\gamma<  \frac{\eta}{2c(B-b)}$, it follows that
\begin{eqnarray}
\nonumber \left\|\textrm{IMF}_1^\textrm{SC}-\textrm{IMF}_1^\textrm{TH}\right\|_{\infty} & \leq & \left\|\int_{[b,\ B]\cap I_{w_1,\gamma, \widetilde{N}_1}} \left[1-(1-\widehat{w_1}(\xi))^{\widetilde{N}_1}\right]\widehat{s}(\xi) e^{2\pi i \xi x}\d \xi \right\|_{\infty}  \leq \\
\nonumber & \leq &  \int_{[b,\ B] \cap I_{w_1,\gamma, \widetilde{N}_1}} \left\| \left[1-(1-\widehat{w_1}(\xi))^{\widetilde{N}_1}\right] \right\|_{\infty} \left\| \widehat{s}(\xi)  \right\|_{\infty} \left\| e^{2\pi i \xi x} \right\|_{\infty} \d \xi \leq  \\
 & \leq& c \int_{[b,\ B]\cap I_{w_1,\gamma, \widetilde{N}_1}} \left\| \left[1-(1-\widehat{w_1}(\xi))^{\widetilde{N}_1}\right] \right\|_{\infty} \d \xi \leq c\gamma (B-b) < \frac{\eta}{2}
\end{eqnarray}

In particular we point out that $I_{w_1,\gamma, \widetilde{N}_1}$, defined as in \eqref{eq:I_gamma}, covers the interval of frequencies $[B-r_1,\ B+r_1]$, for some $r_1>\varepsilon>0$.

This last inequality follows from  the fact that if we scale linearly the filter function $w$ to enlarge its support, as in \eqref{eq:w^a} for $a>1$, its Fourier transform is proportionally shrunk \eqref{eq:fft_w^a}. However the signal $s$ does have a lower bound $b$ in the continuous frequency spectrum which implies that the filter function $w_1$ cannot have a too wide support and as a consequence its Fourier transform cannot be too much squeezed. Therefore it does exist $\varepsilon>0$ which lower bounds the radius $r_1$.

If $\left\|\textrm{IMF}_1^\textrm{TH}\right\|_{\infty}< \frac{\eta}{2}$ then we can for sure regard this component as not significant because $\left\|\textrm{IMF}_1^\textrm{SC}\right\|_{\infty}\leq \left\|\textrm{IMF}_1^\textrm{SC}-\textrm{IMF}_1^\textrm{TH}\right\|_{\infty} + \left\|\textrm{IMF}_1^\textrm{TH}\right\|_{\infty}< \eta$. Otherwise, assuming $\left\|\textrm{IMF}_1^\textrm{TH}\right\|_{\infty} \geq \frac{\eta}{2}$, if $\left\|\textrm{IMF}_1^\textrm{SC}\right\|_{\infty}\geq \eta$, then $\textrm{IMF}_1^\textrm{SC}$ represents the first significant IMF in the decomposition. This conclude the first step of the outer loop in Algorithm \ref{algo:IF}.

In the second step of the outer loop Algorithm \ref{algo:IF} iterates the previous passages using now the remainder signal $s_2=s-\textrm{IMF}_1^{SC}$ and selecting a filter $w_2$ such that the zero of $\widehat{w_2}$ with smallest frequency is at $B-r_1$.

Also in this case, given $\delta>0$, we can identify $N_2\in\N$ such that \eqref{eq:N0} is fulfilled for every $N\geq N_2$. Furthermore  $\widehat{s}_2(\xi)=\widehat{s}(\xi)-\widehat{\textrm{IMF}}_1^{SC}(\xi)=\left[1-\left(1-\widehat{w_2}(\xi)\right)^{\widetilde{N}_1}\right]\widehat{s}(\xi),\ \forall \xi \in \R$ which implies that
\begin{equation}
\left\|\widehat{s}_2\right\|_{\infty}\leq \left\|\left[1-\left(1-\widehat{w_2}(\xi)\right)^{\widetilde{N}_1}\right]\right\|_{\infty}\left\|\widehat{s}(\xi)\right\|_{\infty}\leq \left\|\widehat{s}(\xi)\right\|_{\infty}
\end{equation}
since  $\widehat{w_2}(\xi)\in [0,\ 1],\ \forall \xi \in \R$ \cite{cicone2016adaptive}. Hence $\widehat{s}_2$ has the same uniform upper bound $c$ over all $\xi \in \R^+$ as $\widehat s(\xi)$.

Therefore
\begin{equation}\label{eq:IMF2}
\left\|\textrm{IMF}_2^\textrm{SC}-\textrm{IMF}_2^\textrm{TH}\right\|_{\infty}\leq \left\|\int_{I_{w_2,\gamma, \widetilde{N}_2}}\!\!\!\!\!\!\!\!\!\!\!\!\!\!\!\!\! \left[1-(1-\widehat{w_2}(\xi))^{\widetilde{N}_2}\right]\widehat{s_2}(\xi) e^{2\pi i \xi x}\d \xi \right\|_{\infty} \leq c\gamma (B-b) < \frac{\eta}{2}
\end{equation}
for every $\widetilde{N}_2\geq N_2$ and $0<\gamma<  \frac{\eta}{2c(B-b)}$.

Furthermore $I_{w_2,\gamma, \widetilde{N}_2}$ covers the interval of frequencies $[B-r_2,\ B+r_2]$, for some $r_2>\varepsilon>0$. This last inequality follows from the same reasoning as before and the fact that the lower bound on the continuous frequency spectrum of $s_2$ is again $b$, by construction of $s_2$, the fact that $\gamma$ is fixed for every IMF and the Fourier transform of the scaled filter $w_2$ is a squeezed version of $\widehat w$, ref. equation \eqref{eq:fft_w^a}.

If $\left\|\textrm{IMF}_2^\textrm{TH}\right\|_{\infty}< \frac{\eta}{2}$ then we can regard this component as not significant. If instead $\left\|\textrm{IMF}_2^\textrm{TH}\right\|_{\infty}\geq \frac{\eta}{2}$ and $\left\|\textrm{IMF}_2^\textrm{SC}\right\|_{\infty}\geq \eta$, then $\textrm{IMF}_2^\textrm{SC}$ represents another significant IMF in the decomposition.

The subsequent outer loop steps follow similarly. The existence of the lower limit $\varepsilon$ for all $r_k>0$, $k\geq 1$, ensures that we can have a finite coverage of the interval of frequencies $[b,\ B]$.
In particular the algorithm generates a set $\left\{r_k\right\}_{k=1}^{R}$ such that $\sum_{k=1}^R r_k = B-b$ and there exists a natural number $0\leq M\leq R$ which represents the number of significant IMFs with respect to $\eta$.
\end{proof}

We point out that this theorem holds true also if we consider the $L^2$-norm instead of the $L^\infty$-norm thanks to the inclusion of $L^p$ spaces on a finite measure space.

From this Theorem it follows that IF with a stopping criterion allows to decompose a signal into a finite number of components given by \eqref{eq:IMF_IF_stop} each of which contains frequencies of the original signal filtered in a smart way.

We observe also that this theorem, together with Theorems \ref{thm:theo_1} and \ref{thm:IF_inner_conv_stopping}, allow to conclude that the IF method can not produce fake oscillations. Each IMF is in fact containing part of the oscillatory content of the original signal, as described in \eqref{eq:IMF_cont} and \eqref{eq:IMF_IF_stop}.

\section{IF algorithm in the discrete setting}\label{sec:Discrete}

Real life signals are discrete and compactly supported, therefore we want to analyze the IF algorithm discretization and study its properties.

Consider a signal $s(x)$, $x\in\R$, we assume for simplicity it is supported on $[0,\ 1]$, sampled at $n$ points $x_j= \frac{j}{n-1}$, with $j= 0,\ldots, n-1$, with a sampling rate which allows to capture all its fine details, so that aliasing will not play any role. The goal is to decompose the vector $\left[s(x_j)\right]_{j=0}^{n-1}$ into vectorial IMFs. Without loosing generality we can assume that $\|\left[s(x_j)\right]\|_2=1$.

From now on, to simplify the formulas, we use the notation $s=\left[s(x_j)\right]_{j=0}^{n-1}$. Furthermore, if not specified differently, we consider as matrix norm the so called Frobenius norm $\|A\|_2=\sqrt{\sum_{i,\ j=0}^{n-1}\left|a_{ij}\right|^2}$ which is unitarily invariant.

\begin{definition}\label{def:Discrete_filter}
A vector $w\in\R^n$, $n$ odd number, is called a \textbf{filter} if its values are symmetric with respect to the middle, nonnegative, and $\sum_{p=1}^n w_p  = 1$.
\end{definition}

We assume that a filter shape has been selected a priori, like one of the Fokker-Planck filters described in \cite{cicone2016adaptive}, and that some invertible and monotone scaling function $g_m$ has been chosen so that $w_m(\xi)$ can be computed as described in \eqref{eq:w_m}.
Therefore, assuming $s_1=s$, the main step of the IF method becomes
\begin{equation}\label{eq:s_m+1}
s_{m+1}(x_i) = s_{m}(x_i)-\int_{x_i-l_m}^{x_i+l_m} \!\!\!\!\!\!\! s_m(y)w_m(x_i-y)\dy\approx s_{m}(x_i)-\!\!\!\!\! \sum_{x_j=x_i-l_m}^{x_i+l_m}\!\!\!\!\! s_m(x_j)w_m(x_i-x_j)\frac{1}{n}, \quad j=0,\ldots,n-1
\end{equation}

In matrix form we have

\begin{equation}\label{eq:MatrixForm}
    s_{m+1}=(I-W_m)s_m
\end{equation}
where
\begin{equation}\label{eq:K}
    W_m=\left[w_m(x_i-x_j)\cdot \frac{1}{n}\right]_{i,\ j=0}^{n-1}=\left[\frac{w(g_{m}^{-1}(x_i-x_j))}{\sum_{z_r=-1}^{1} w(z_r) |g'_{m}(z_r)| \Delta z_r}\cdot \frac{1}{n}\right]_{i,\ j=0}^{n-1}
\end{equation}

Algorithm \ref{algo:IF_discrete} provides the discrete version of Algorithm \ref{algo:IF}

\begin{algorithm}
\caption{\textbf{Discrete Iterative Filtering} IMF = DIF$(s)$}\label{algo:IF_discrete}
\begin{algorithmic}
\STATE IMF = $\left\{\right\}$
\WHILE{the number of extrema of $s$ $\geq 2$}
      \STATE $s_1 = s$
      \WHILE{the stopping criterion is not satisfied}
                  \STATE compute the function $w_m(\xi)$, whose half support length $l_m$ is based on the signal $\left[s_m(x_i)\right]_{i=0}^{n-1}$
                  \STATE  $s_{m+1}(x_i) = s_{m}(x_i) - \sum_{j=0}^{n-1} s_m(x_j)w_m(|x_i-x_j|) \frac{1}{n},\qquad i= 0,\ldots, n-1$
                  \STATE  $m = m+1$
      \ENDWHILE
      \STATE IMF = IMF$\,\cup\,  \{ s_{m}\}$
      \STATE $s=s-s_{m}$
\ENDWHILE
\STATE IMF = IMF$\,\cup\,  \{ s\}$
\end{algorithmic}
\end{algorithm}

We remind that the first while loop is called outer loop, whereas the second one inner loop.

The first IMF is given by $\textrm{IMF}_1=\lim_{m\rightarrow\infty} (I-W_m)s_m$, where we point out that the matrix $W_m=[w_m(x_i-x_j)]_{i,\ j=0}^{n-1}$ depends on the half support length $l_m$ at every step $m$.

However in the implemented code the value $l_m$ is usually computed only in the first iteration of each inner loop and then kept constant in the subsequent steps, so that the matrix $W_m$ is equal to $W$ for every $m\in\N$. So the first IMF is given by

\begin{equation}\label{eq:First_IMF_fixed_length}
\textrm{IMF}_1=\lim_{m\rightarrow\infty} (I-W)^{m} s
\end{equation}

Furthermore in the implemented algorithm we do not let $m$ to go to infinity, instead we use a stopping criterion as described in section \ref{sec:Stopping}. For instance, we can define the following quantity
\begin{equation}\label{eq:SD}
SD:=\frac{\|s_{m+1}-s_{m}\|_2}{\|s_{m}\|_2}
\end{equation}
and we can stop the process when the value $SD$ reaches a certain threshold. Another possible option is to introduce a limit on the maximal number of iterations for all the inner loops. It is always possible to adopt different stopping criteria for different inner loops.

If we consider the case of linear scaling, making use of \eqref{eq:w_m_linear}, the matrix $W_m$ becomes

\begin{equation}\label{eq:W_linear}
    W_m=\left[\frac{w\left(\frac{x_i-x_j}{l_m}\right)}{l_m}\cdot \frac{1}{n}\right]_{i,\ j=0}^{n-1}=\left[\frac{w\left(\frac{i-j}{(n-1)l_m}\right)}{l_m}\cdot \frac{1}{n}\right]_{i,\ j=0}^{n-1}
\end{equation}

We point out that the previous formula represent an ideal $W_m$, however we need to take into account the quadrature formula we use to compute the numerical convolution in order to build the appropriate $W_m$ to be used in the DIF algorithm.

For instance, if we use the rectangle rule, we need to substitute the exact value of $w(y)$ at $y$ with its average value in the interval of length $\frac{1}{n}$ centered in $y$ and multiply this value for the length of  interval itself. Furthermore we should handle appropriately the boundaries of the support of $w(y)$, in fact the half length of the support is, in general, a non integer value. This can be done by handling separately the first and last interval in the quadrature formula. In fact we can scale the value of the integral on these two intervals proportionally to the actual length of the intervals themselves.

If we take into account all the aforementioned details we can reproduce a matrix $W_m$ which is row stochastic.

We observe that in the implemented code we simply scale each row of $W_m$ by its sum so that the matrix becomes row stochastic.

\subsection{Spectrum of $W_m$}

Since $W_m\in\R^{n\times n}$ represents the discrete convolution operator, it can be a circulant  matrix, Toeplitz  matrix or it can have a more complex structure. Its structure depends on the way we extend the signal outside its boundaries.

From now on we assume for simplicity that $n$ is an odd natural number, and that we have periodical extension of signals outside the boundaries, therefore $W_m$ is a circulant matrix given by

\begin{equation}\label{eq:W_m}
   W_m=\left[
         \begin{array}{cccc}
           c_0 & c_{n-1} & \ldots & c_1 \\
           c_{1} & c_0 & \ldots & c_2 \\
           \vdots & \vdots & \ddots & \vdots \\
           c_{n-1} & c_{n-2} & \ldots & c_0 \\
         \end{array}
       \right]
\end{equation}
where $c_j\geq 0$, for every $j=0,\ldots, \ n-1$, and $\sum_{j=0}^{n-1}c_j=1$.
Each row contains a circular shift of the entries of a chosen vector filter $w_m$. For the non periodical extension case we refer the reader to \cite{cicone2017BC}.

Denoting by $\sigma(W_m)$ the spectrum of the matrix, in the case of a circulant matrix it is well known that the eigenvalues $\lambda_j\in\sigma(W_m)$, $j=0,\ldots, \ n-1$ are given by the formula

\begin{equation}\label{eq:eig}
   \lambda_j=c_0+c_{n-1}\omega_j+\ldots+c_1 \omega_j^{n-1},\quad \textrm{ for } \qquad j=0,\ldots, \ n-1
\end{equation}
where $i=\sqrt{-1}$, and $\omega_j=e^\frac{2\pi i j}{n}$ $j$--th power of the $n$--th root of unity, for $j=0,\ldots, \ n-1$.

Since we construct the matrices $W_m$ using symmetric filters $w_m$, we have that $c_{n-j}=c_j$ for every $j=1,\ldots,\frac{n-1}{2}$. Hence $W_m$ is circulant, symmetric and
\begin{equation*}
   \lambda_j=c_0+c_{1}\left(\omega_j+\omega_j^{n-1}\right)+c_{2}\left(\omega_j^2+\omega_j^{n-2}\right)\ldots+
   c_{\frac{n-1}{2}}\left(\omega_j^\frac{n-1}{2}+\omega_j^{\frac{n+1}{2}}\right)=
\end{equation*}
\begin{equation*}
   c_0+\sum_{k=1}^{\frac{n-1}{2}}c_{k}\left(\omega_j^k+\omega_j^{n-k}\right)=c_0+\sum_{k=1}^{\frac{n-1}{2}}c_{k}\left(e^{\frac{2\pi i j}{n}k}+e^{\frac{2\pi i j}{n}(n-k)}\right)=
\end{equation*}
\begin{equation}\label{eq:eig1}
   c_0+\sum_{k=1}^{\frac{n-1}{2}}c_{k}\left(e^{\frac{2\pi i j}{n}k}-e^{\frac{2\pi i j}{n}k}e^{2\pi i j}\right)
\end{equation}
Therefore
\begin{equation}\label{eq:eig2}
   \lambda_j = c_0+2\sum_{k=1}^{\frac{n-1}{2}}c_{k} \cos\left(\frac{2\pi j k}{n}\right),\quad \textrm{ for } \qquad j=0,\ldots, \ n-1
\end{equation}

It is evident that, for any $j=0,\ldots, \ n-1$, $\lambda_j$ is real and $\sigma(W_m)\subseteq [-1,\ 1]$ since $W_m$ is a stochastic matrix.

Furthermore, if we make the assumption that the filter half supports length is always $l_m\leq \frac{n-1}{2}$, then the entries $c_j$ of the matrix $W_m$ are going to be zero at least for any $j\in[\frac{n-1}{4},\frac{3}{4}(n-1)]$.

We observe that the previous assumption is reasonable since it implies that we can study oscillations with periods at most equal to half of the length of a signal.

\begin{theorem}\label{thm:Spectra}
Considering the circulant matrix $W_m$ given in \eqref{eq:W_m}, assuming that $n>1$, $\sum_{j=0}^{n-1}c_j=1$, $c_j\geq 0$, and $c_{n-j}=c_j$, for every $j=1,\ldots,n-1$.

Then $W_m$ is non--defective, diagonalizable and has real eigenvalues.

Furthermore, if the filter half supports length $l_m$ is small enough so that $c_0=1$ and $c_j=0$, for every $j=1,\ldots, \ n-1$,  then we have $n$ eigenvalues $\lambda_j$ all equal $1$.

Otherwise, if the filter half supports length $l_m$ is big enough so that $c_0<1$ and the values $c_k$ correspond to the discretization of a function with compact and connected support, then there is one and only one eigenvalue equal to $1$, which is $\lambda_0$, all the other eigenvalues $\lambda_j$ are real and strictly less than one in absolute value. So they belong to the interval $(-1,1)$.
\end{theorem}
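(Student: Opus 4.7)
The plan is to handle the three assertions separately, leaning on the explicit spectral formula \eqref{eq:eig2} already derived in the text. For the first assertion, the hypothesis $c_{n-j}=c_j$ makes $W_m$ not just circulant but real symmetric, so the spectral theorem for real symmetric matrices immediately gives diagonalizability, non-defectiveness, and real eigenvalues, with no extra work needed.

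The degenerate case $c_0=1$ is trivial: $W_m=I$ and all $\lambda_j$ equal $1$. For the substantive case $c_0<1$, I would start by evaluating $\lambda_0 = c_0 + 2\sum_{k=1}^{(n-1)/2} c_k = \sum_{j=0}^{n-1} c_j = 1$, using the symmetry $c_{n-k}=c_k$ together with the stochastic condition. For $j\neq 0$, I would rewrite \eqref{eq:eig2} as
\begin{equation*}
1-\lambda_j \;=\; 2\sum_{k=1}^{(n-1)/2} c_k\bigl(1-\cos(2\pi jk/n)\bigr),
\end{equation*}
which is a sum of nonnegative terms. The assumption that the $c_k$ arise from discretizing a function with compact and connected support, combined with $c_0<1$, forces $c_1>0$: since the filter is even, a connected support must be a symmetric interval $[-L,L]$ containing the origin, and $c_0<1$ forces some $c_k$ with $k\geq 1$ to be nonzero, which in turn means $L\geq 1/(n-1)$ and hence $c_1>0$. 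Since $\cos(2\pi j/n)<1$ for every $1\leq j\leq n-1$, the $k=1$ summand is strictly positive, giving $\lambda_j<1$.

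For the lower bound I would estimate
\begin{equation*}
\lambda_j \;\geq\; c_0 - 2\sum_{k=1}^{(n-1)/2} c_k \;=\; 2c_0 - 1 \;\geq\; -1,
\end{equation*}
with equality demanding simultaneously $c_0=0$ and $\cos(2\pi jk/n)=-1$ for every $k$ with $c_k>0$. Applied to $k=1$, where $c_1>0$, this would force $2j/n$ to be an odd integer, which is impossible because $n$ is odd and $1\leq j\leq n-1$. Hence $\lambda_j>-1$ as well, and uniqueness of the eigenvalue $1$ follows at once from $\lambda_0=1$ combined with $\lambda_j\in(-1,1)$ for $j\geq 1$.

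The main obstacle is precisely the upgrade from the easy bounds $|\lambda_j|\leq 1$, which follow from stochasticity alone, to the strict inequalities $|\lambda_j|<1$. Both of these rely on an input tailored to the hypotheses: the connectedness of the discrete support is exactly what delivers the nonvanishing weight $c_1>0$ that breaks the tie at the upper endpoint, while the oddness of $n$ is exactly what rules out $\cos(2\pi j/n)=-1$ for any $j\in\{1,\dots,n-1\}$ and thus breaks the tie at the lower endpoint. Once these two structural inputs are identified, what remains is elementary manipulation of \eqref{eq:eig2}.
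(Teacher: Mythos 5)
Your proof is correct and follows essentially the same route as the paper's: the spectral theorem for real symmetric matrices for the first claim, the identity-matrix observation when $c_0=1$, and the explicit eigenvalue formula \eqref{eq:eig2} combined with $c_1>0$ (deduced from connectedness of the support) to rule out $\lambda_j=\pm 1$ for $j\neq 0$. If anything, your equality-case analysis at the lower endpoint, which uses the oddness of $n$ to exclude $\cos(2\pi j/n)=-1$, is more explicit than the paper's brief appeal to $\lambda_d$ being a convex combination of cosines and $+1$.
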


\begin{proof}
First of all we recall that symmetric matrices are always non--defective, diagonalizable and with a real spectrum.

In the case of $c_0=1$ the conclusion follows immediately from the observation that $W_m$ reduces to an identity matrix.

When $c_0<1$ from \eqref{eq:eig2} it follows that $\lambda_0=1$ and all the other eigenvalues belong to the interval $[-1,1]$. Let us assume, by contradiction, that there exists another eigenvalue $\lambda_d=1$ for some $d\in\left\{1,\ 2,\ldots,\ n-1\right\}$. We assume for simplicity that $n$ is odd. The proof in the even case works in a similar way.

From \eqref{eq:eig2} and the fact that $c_{n-j}=c_j$, for every $j=1,\ldots,\frac{n-1}{2}$, it follows that
\begin{equation}\label{eq:lambda_d}
   \lambda_d = c_0+2\sum_{k=1}^{\frac{n-1}{2}}c_{k} \cos\left(\frac{2\pi d k}{n}\right),\quad \textrm{ for } \qquad  d\in\left\{1,\ 2,\ldots,\ n-1\right\}
\end{equation}
In the right hand side we have among the terms $c_{k}$, which by themselves would add up to 1, at least $c_1>0$ which is multiplied by $\cos\left(\frac{2\pi d}{n}\right)<1$ for any $d\in\left\{1,\ 2,\ldots,\ n-1\right\}$. Therefore the right hand side will never add up to 1. Hence we have a contradiction.

From \eqref{eq:eig2} it follows also that $\lambda_d\neq -1$ for any $d\in\left\{1,\ 2,\ldots,\ n-1\right\}$ because $\lambda_d$ is given by a convex combination of cosines and $+1$.

So all the eigenvalues of $W_m$ except $\lambda_0$ are real and strictly less than one in modulus.
\end{proof}

We observe that in the discrete iterative filtering algorithm the entries $c_k$ derive from the discretization of a filter function which is by Definition \ref{def:window} compactly supported. Furthermore, since the filter function is used to compute the moving average of a signal, it is reasonable to require its support to be connected.

Form this theorem it follows that

\begin{corollary}\label{cor:SpectraDoubleConvolution}
Considering the matrix $W_m$ given in the previous theorem, assuming $c_0<1$ and that $W_m$ is constructed using a filter $w_m$ that is produced as convolution of a symmetric filter $h_m$ with itself, then there is one and only one eigenvalue equal to $1$, all the other eigenvalues belong to the interval $[0,1)$.
\end{corollary}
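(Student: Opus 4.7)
The plan is to exploit the convolution-as-multiplication structure of circulant matrices. If $w_m = h_m \ast h_m$, then at the matrix level $W_m = H_m^2$, where $H_m$ is the circulant matrix built from the (discretized) symmetric filter $h_m$ exactly as $W_m$ is built from $w_m$ in \eqref{eq:K}. Since circulant matrices are simultaneously diagonalized by the discrete Fourier basis, the spectrum of $W_m$ consists precisely of the squares of the eigenvalues of $H_m$, and Theorem \ref{thm:Spectra} applied to $H_m$ will give the claim almost immediately.

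First I would set up $H_m$ as the symmetric circulant whose first row comes from the normalized discretization of $h_m$. By hypothesis $h_m$ is symmetric, nonnegative, compactly and connectedly supported, and its entries sum to $1$, so $H_m$ satisfies every hypothesis of Theorem \ref{thm:Spectra}. In particular, since $c_0 < 1$ for $W_m$ forces $c_0^{h} < 1$ for $H_m$ (otherwise $h_m$ would reduce to a delta and $w_m = h_m \ast h_m$ would also reduce to a delta, contradicting $c_0 < 1$ for $W_m$), Theorem \ref{thm:Spectra} yields that $H_m$ has the simple eigenvalue $1$ (namely $\mu_0 = 1$) and all other eigenvalues $\mu_j$ are real with $|\mu_j| < 1$, i.e.\ $\mu_j \in (-1,1)$.

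Second, I would verify $W_m = H_m^2$. Both matrices are $n \times n$ circulants, hence both are diagonalized by the DFT matrix $F$ with entries $F_{jk} = \tfrac{1}{\sqrt{n}}\omega_j^k$. Because the DFT intertwines circular convolution with pointwise multiplication, and because the discretization with weight $\tfrac{1}{n}$ that defines $W_m$ in \eqref{eq:K} is precisely the quadrature that turns continuous convolution $h_m \ast h_m$ into discrete circular convolution of the sample vectors, one obtains $\widehat{w_m}(\xi_j) = \widehat{h_m}(\xi_j)^2$ at each discrete frequency, and hence $\lambda_j(W_m) = \mu_j(H_m)^2$ for every $j = 0,\ldots,n-1$. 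Squaring sends $1$ to $1$ and $(-1,1)$ into $[0,1)$, so $W_m$ has the simple eigenvalue $\lambda_0 = 1$ and all other eigenvalues lie in $[0,1)$.

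The main obstacle, more a bookkeeping point than a genuine difficulty, is the identification $W_m = H_m^2$: one must check that the quadrature and the $1/n$ normalization used to build $W_m$ out of $w_m$ are compatible with those used to build $H_m$ out of $h_m$, and that the support of $h_m$ is short enough that the circular discrete convolution of $h_m$ with itself agrees with the discretization of $h_m \ast h_m$ (i.e.\ no wraparound). Both are guaranteed by the standing hypothesis $l_m \leq (n-1)/2$ together with the fact that convolving $h_m$ with itself at most doubles its support. Once this identification is in place, the conclusion is immediate from Theorem \ref{thm:Spectra} and the convolution theorem for circulant matrices.
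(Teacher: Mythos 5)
Your proposal is correct and follows essentially the same route as the paper: both hinge on writing $W_m = H_m^2$ for the symmetric circulant $H_m$ associated with $h_m$ and then invoking Theorem \ref{thm:Spectra}. The only cosmetic difference is that you apply Theorem \ref{thm:Spectra} to $H_m$ and square its eigenvalues, whereas the paper applies it to $W_m$ and uses $W_m=\widetilde{W}_m^T\widetilde{W}_m$ only to force nonnegativity of the spectrum; your extra care about the discretization/quadrature matching is a welcome elaboration of a step the paper simply asserts.
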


\begin{proof}
The proof follows directly from the previous theorem and the fact that the matrix $W_m=\widetilde{W}_m^T*\widetilde{W}_m=\widetilde{W}_m^2$, where $\widetilde{W}_m$ is a circulant symmetric convolution matrix associated with the filter $\widetilde{w}_m$.
\end{proof}

\begin{corollary}\label{cor:Kernel_I-W}
Assuming $c_0<1$, the eigenvector of $W_m$ corresponding to $\lambda_0=1$ is a basis for the kernel of the matrix $(I-W_m)$, which has dimension one.
\end{corollary}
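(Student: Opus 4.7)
The plan is to deduce the corollary directly from the spectral information already established in Theorem \ref{thm:Spectra}. First I would observe the standard linear-algebra fact that $v\in\ker(I-W_m)$ if and only if $W_m v = v$, i.e. $v$ lies in the eigenspace of $W_m$ associated to the eigenvalue $1$. Therefore the statement reduces entirely to showing that this eigenspace is one-dimensional and is spanned by the eigenvector associated to $\lambda_0=1$.

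Next I would invoke Theorem \ref{thm:Spectra}: under the assumption $c_0<1$ (and the standing assumption that the nonzero $c_k$ correspond to the discretization of a filter with compact, connected support), the theorem yields that $W_m$ is diagonalizable with real spectrum, that $\lambda_0=1$ is an eigenvalue, and that every other eigenvalue $\lambda_j$ with $j=1,\dots,n-1$ satisfies $|\lambda_j|<1$ and in particular $\lambda_j\neq 1$. Since $W_m$ is symmetric it is diagonalizable and algebraic and geometric multiplicities coincide; because the circulant eigenvalue formula \eqref{eq:eig2} enumerates the eigenvalues as $\lambda_0,\lambda_1,\dots,\lambda_{n-1}$ with $1$ appearing only at index $j=0$, the eigenvalue $1$ has multiplicity one. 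Hence the eigenspace of $W_m$ corresponding to $\lambda_0=1$ is one-dimensional, and a single eigenvector suffices as a basis.

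Combining the two observations gives $\ker(I-W_m)=\mathrm{span}\{v_0\}$, where $v_0$ is the eigenvector of $W_m$ associated to $\lambda_0=1$, which proves the corollary. There is no real obstacle here: the work has already been done in Theorem \ref{thm:Spectra}; the corollary is a direct translation of ``$\lambda_0=1$ is a simple eigenvalue of $W_m$'' into ``$I-W_m$ has a one-dimensional kernel spanned by the corresponding eigenvector''. The only minor subtlety worth mentioning explicitly is that the passage from simplicity of $1$ in the list \eqref{eq:eig2} to one-dimensionality of $\ker(I-W_m)$ uses the symmetry (hence diagonalizability) of $W_m$, so that algebraic and geometric multiplicities agree, which has already been recorded in Theorem \ref{thm:Spectra}.
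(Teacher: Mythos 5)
Your proof is correct and follows exactly the route the paper intends: the corollary is stated without proof precisely because it is the direct translation of Theorem \ref{thm:Spectra} (simplicity of the eigenvalue $1$ when $c_0<1$, plus diagonalizability from symmetry) into the statement that $\ker(I-W_m)$ is one-dimensional and spanned by the corresponding eigenvector. Your explicit remark that symmetry is what guarantees the agreement of algebraic and geometric multiplicities is the right subtlety to flag, and nothing is missing.
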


Before presenting the main proposition we recall that, given a circulant matrix $C=\left[c_{pq}\right]_{p,\ q=0,\ldots,n-1}$, its eigenvalues are
\begin{equation}\label{eq:Lambdas}
\lambda_p\ =\ \sum_{q=0}^{n-1} c_{1q}e^{-2\pi i p \frac{q}{n}}  \qquad  \qquad p\ =\ 0,\ldots,\ n-1
\end{equation}
and the corresponding eigenvectors are
\begin{equation}\label{eq:Eigenvectors}
u_p\ =\ \frac{1}{\sqrt{n}} \left[1,\ e^{-2\pi i p\frac{1}{n}},\ldots,\ e^{-2\pi i p\frac{n-1}{n}}\right]^T \qquad  \qquad p\ =\ 0,\ldots,\ n-1
\end{equation}
which form an orthonormal set.

We recall that an eigenvalue of a matrix is called semisimple whenever its algebraic multiplicity coincides with its geometric multiplicity.

\begin{proposition}\label{pro:LimitValue}

Given a matrix $W_m$, assuming  that all the assumptions of Theorem \ref{thm:Spectra} and Corollary \ref{cor:SpectraDoubleConvolution} hold true,
and assuming that $W_m=W$ for any $m\geq 1$. Given $\left\{\lambda_p\right\}_{p=0,\ldots,n-1}$, semisimple eigenvalues of $W$, and the corresponding eigenvectors $\left\{u_p\right\}_{p=0,\ldots,n-1}$, we define the matrix $U$ having as columns the eigenvectors $u_p$. Assuming that $W$ has $k$ zero eigenvalues, where $k$ is a number in the set $\in\{0,\ 1,\ldots,\ n-1\}$,

Then
\begin{equation}\label{eq:discreteLimit}
\lim_{m\rightarrow \infty}(I-W)^m=U Z U^T
\end{equation}
where $U$ is unitary and $Z$ is a diagonal matrix with entries all zero except $k$ elements in the diagonal which are equal to one.
\end{proposition}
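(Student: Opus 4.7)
The plan is to use the spectral decomposition of the symmetric circulant matrix $W$ and then pass the limit through the diagonalization. By Theorem \ref{thm:Spectra} and Corollary \ref{cor:SpectraDoubleConvolution}, $W$ is real symmetric, diagonalizable, has eigenvalues $\lambda_p \in [0,1]$ with $\lambda_0 = 1$ as the unique eigenvalue equal to $1$, and the eigenvalues are semisimple by hypothesis. One may therefore write $W = U \Lambda U^{-1}$ with $\Lambda = \mathrm{diag}(\lambda_0,\ldots,\lambda_{n-1})$ and $U$ the eigenvector matrix. For a real orthonormal eigenbasis (which exists because $W$ is real symmetric; the complex DFT columns in \eqref{eq:Eigenvectors} can be paired into real $\cos$/$\sin$ combinations across each conjugate pair $\lambda_p = \lambda_{n-p}$), one has $U^{-1} = U^{T}$, so $W = U\Lambda U^{T}$ and
\begin{equation}
(I - W)^{m} = U (I - \Lambda)^{m} U^{T}.
\end{equation}

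Next I would analyze the $n$ diagonal entries $(1-\lambda_p)^{m}$ of $(I - \Lambda)^{m}$ case by case. For $\lambda_0 = 1$ the entry is identically zero; for the $k$ indices with $\lambda_p = 0$ the entry is identically $1$; and for every remaining index $\lambda_p \in (0,1)$, so $|1-\lambda_p| < 1$ and $(1-\lambda_p)^{m} \to 0$ as $m \to \infty$. Hence $(I - \Lambda)^{m}$ converges entrywise to the diagonal matrix $Z$ with a $1$ in each of the $k$ slots corresponding to a zero eigenvalue of $W$ and a $0$ everywhere else. Since matrix multiplication is continuous (for fixed $U$),
\begin{equation}
\lim_{m \to \infty} (I-W)^{m} = U \Bigl(\lim_{m\to\infty}(I-\Lambda)^{m}\Bigr) U^{T} = U Z U^{T},
\end{equation}
which is the claimed identity.

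The main subtlety to handle carefully is the appearance of $U^{T}$ rather than the conjugate transpose $U^{*}$. The DFT eigenvectors displayed in \eqref{eq:Eigenvectors} are complex and satisfy $U^{*} U = I$ rather than $U^{T} U = I$, so if one used literally that complex $U$ the correct formula would have $U^{*}$ on the right. The clean route is to exploit that $W$ is real symmetric with paired eigenvalues $\lambda_p = \lambda_{n-p}$ (from \eqref{eq:eig2} and $\cos(2\pi(n-j)k/n) = \cos(2\pi j k/n)$), so the two--dimensional complex conjugate eigenspaces can be rotated into real two--dimensional eigenspaces, yielding a real orthogonal $U$ for which $U^{T} = U^{-1}$; the semisimplicity assumed in the proposition is exactly what ensures this diagonalization is possible without Jordan blocks. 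Beyond this change of basis no further work is needed: the entire argument is an eigenvalue--level computation combined with the spectral bounds on $\sigma(W)$ already established in Theorem \ref{thm:Spectra} and Corollary \ref{cor:SpectraDoubleConvolution}.
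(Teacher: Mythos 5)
Your proof is correct and follows essentially the same route as the paper's: diagonalize $W$, push the power through the diagonalization, and take the entrywise limit of $(1-\lambda_p)^m$ using the spectral localization from Theorem \ref{thm:Spectra} and Corollary \ref{cor:SpectraDoubleConvolution}. Your extra care in replacing the complex DFT eigenvectors by a real orthogonal eigenbasis (so that $U^{T}=U^{-1}$ rather than $U^{*}=U^{-1}$) addresses a point the paper's own proof glosses over, but it is a refinement of the same argument, not a different approach.
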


\begin{proof}
From Theorem \ref{thm:Spectra} we know that $W$ is diagonalizable, therefore the matrix $U$ is orthogonal and all the eigenvalues of $W$ are semisimple. Furthermore, since the eigenvectors of $W$ are orthonormal, it follows that $U$ is a unitary matrix. Hence $W=U D U^T$, where $D$ is a diagonal matrix containing in its diagonal the eigenvalues of $W$.
From the assumption that $W$ is associated with a double convolved filter it follows that the spectrum of $W$ is contained in $[0,1]$, ref. Corollary \ref{cor:SpectraDoubleConvolution}. Therefore also the spectrum of $(I-W)$ is contained in $[0,1]$. Furthermore
\begin{equation*}
    (I-W)=U(I-D)U^T
\end{equation*}
and $I-D$ is a diagonal matrix whose diagonal entries are in the interval $(0,1)$ except the first one which equals 0, ref. Corollary \ref{cor:Kernel_I-W}, and $k$ entries that are equal to 1.
Hence
\begin{equation*}
\lim_{m\rightarrow \infty}(I-W)^m=\lim_{m\rightarrow \infty}U(I-D)^m U^T= U Z U^T
\end{equation*}
where $Z$ is a diagonal matrix with entries all zero except $k$ elements in the diagonal which are equal to one.
\end{proof}

From the previous proposition it follows
\begin{corollary}\label{cor:ExplicitFormulaDiscreteIMF}
Given a signal $s\in\R^n$, assuming that we are considering a doubly convolved filter, and the half filter support length is constant throughout all the steps of an inner loop,

Then the first outer loop step of the DIF method converges to
\begin{equation}\label{eq:discreteIMF}
\textrm{IMF}_1=\lim_{m\rightarrow \infty}(I-W)^m s=U Z U^Ts
\end{equation}
\end{corollary}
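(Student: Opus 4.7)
The plan is to reduce the claim directly to Proposition \ref{pro:LimitValue}, since under the stated hypotheses nothing more is needed than the matrix limit already established there, evaluated at the fixed signal $s$.

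First I would verify the hypotheses. The assumption that the filter is obtained as the convolution of a symmetric filter with itself places us in the setting of Corollary \ref{cor:SpectraDoubleConvolution}, so the spectrum $\sigma(W)\subseteq[0,1]$, the eigenvalue $\lambda_0=1$ is simple, and the remaining eigenvalues lie in $[0,1)$. Combined with the hypothesis that the half filter support length is constant throughout the inner loop, we have $W_m=W$ for every $m\geq 1$, so that all the assumptions of Proposition \ref{pro:LimitValue} hold, including the diagonalization $W=UDU^T$ with $U$ unitary and the eigenvalues semisimple. In particular, the number $k$ of zero eigenvalues of $W$ is well defined and determines the matrix $Z$ appearing in the conclusion.

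Next, equation \eqref{eq:First_IMF_fixed_length} already gives $\textrm{IMF}_1=\lim_{m\to\infty}(I-W)^m s$, which is the first equality in the statement. For the second equality, Proposition \ref{pro:LimitValue} yields
$$\lim_{m\to\infty}(I-W)^m = UZU^T,$$
and since multiplication on the right by the fixed vector $s$ is a continuous linear map from $\R^{n\times n}$ to $\R^n$, the limit may be moved through this operation, giving $\lim_{m\to\infty}(I-W)^m s = UZU^T s$.

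There is no substantive obstacle here: the corollary is essentially a restatement of Proposition \ref{pro:LimitValue} evaluated at $s$, and the only thing to check, beyond invoking that result, is the routine continuity argument that allows the limit over matrices and the action on the fixed vector $s$ to be exchanged.
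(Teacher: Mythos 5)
Your proposal is correct and matches the paper exactly: the paper gives no separate proof, simply noting that the corollary ``follows from the previous proposition,'' and your argument --- verifying the hypotheses of Proposition \ref{pro:LimitValue} via Corollary \ref{cor:SpectraDoubleConvolution} and then applying the matrix limit to the fixed vector $s$ by continuity --- is precisely the reasoning the paper leaves implicit.
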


So the DIF method in the limit produces IMFs that are projections of the given signal $s$ onto the eigenspace of $W$ corresponding to the zero eigenvalue which has algebraic and geometric multiplicity $k\in\{0,\ 1,\ldots,\ n-1\}$. Clearly, if $W$ has only a trivial kernel then the method converges to the zero vector. We point out that since (\ref{eq:Lambdas}) is also the Discrete Fourier Transform (DFT) formula of the sequence $\{c_{1q}\}_{q=0,\ldots,n-1}$, where $C=[c_{pq}]$ is a circulant matrix, it follows that the eigenvalues of $W$,  can be computed directly as the DFT of the sequence $\{w_{1q}\}_{q=0,\ldots,n-1}$, by means of the Fast Fourier Transform (FFT). If we regard the DFT as a discretization of the Fourier Transform of the filter function $w$ it becomes clear that, since the latter has only isolated zeros, in many cases we will not have eigenvalues exactly equal to zero. So in general $W$ has only a trivial kernel and \eqref{eq:discreteIMF} converges to the zero vector. In order to ensure that the method produces a non zero vector we need to discontinue the calculation introducing some stopping criterion.

\subsection{DIF inner and outer loop convergence in presence of a stopping criterion}

If we assume that the half support length $l_m$ is computed only in the beginning of each inner loop, then the first IMF is given by \eqref{eq:First_IMF_fixed_length} and \eqref{eq:discreteIMF}.

In order to have a finite time method we may introduce a stopping criterion in the DIF algorithm, like the condition
\begin{equation}\label{eq:Discrete_Abs_StopCond}
\|s_{m+1}-s_m\|_2<\delta \qquad \forall m\geq N_0
\end{equation}
for some fixed $\delta>0$

Then, based on Corollary \ref{cor:ExplicitFormulaDiscreteIMF}, we produce an approximated first IMF given by

\begin{equation}\label{eq:approx_first_IMF}
    \overline{\textrm{IMF}}_1=(I-W)^{N_0} s = U(I-D)^{N_0} U^T s
\end{equation}

\begin{theorem}\label{thm:DIF_conv_stop}
    Given $s\in\R^n$, we consider the convolution matrix $W$ defined in \eqref{eq:W_m}, associated with a filter vector $w$ given as a symmetric filter $h$ convolved with itself. Assuming that $W$ has $k$ zero eigenvalues, where $k$ is a number in the set $\in\{0,\ 1,\ldots,\ n-1\}$, and fixed $\delta>0$,

Then, calling $\widetilde{s}=U^T s$, for the minimum $N_0\in\N$ such that it holds true the inequality

\begin{equation}\label{eq:N0_discrete}
    \frac{N_0^{N_0}}{\left(N_0+1\right)^{N_0+1}}<\frac{\delta}{\|\widetilde{s}\|_\infty{\sqrt{n-1-k}}}
\end{equation}
we have that $\left\| s_{m+1}-s_m\right\|_{2}<\delta \quad \forall m\geq N_0$ and the first IMF is given by
\begin{equation}\label{eq:IMF1_direct}
\overline{\textrm{IMF}}_1=U(I-D)^{N_0} U^T s= U P \left[
                                          \begin{array}{ccccccc}
                                             0 &   &   &   &   &   &   \\
                                              &  (1-\lambda_1 )^{N_0} &   &   &   &   &   \\
                                              &   & \ddots  &   &   &   &   \\
                                              &   &   &  (1-\lambda_{n-1-k} )^{N_0} &   &   &   \\
                                              &   &   &   &  1 &   &   \\
                                              &   &   &   &   &  \ddots &   \\
                                              &   &   &   &   &   & 1  \\
                                          \end{array}
                                        \right] P^T U^T s
\end{equation}
where $P$ is a permutation matrix which allows to reorder the columns of $U$, which correspond to eigenvectors of $W$, so that the corresponding eigenvalues $\{\lambda_p\}_{p=1,\ldots,\ n-1}$ are in decreasing order.
\end{theorem}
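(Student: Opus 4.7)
The plan is to mirror the continuous argument of Theorem \ref{thm:IF_inner_conv_stopping}, replacing the Fourier transform with the spectral decomposition of $W$ supplied by Proposition \ref{pro:LimitValue}. By Corollary \ref{cor:SpectraDoubleConvolution}, $W = U D U^T$ with $U$ unitary and $D = \operatorname{diag}(\lambda_0, \ldots, \lambda_{n-1})$ having spectrum in $[0, 1]$: exactly one eigenvalue (say $\lambda_0$) equals $1$, exactly $k$ equal $0$, and the remaining $n-1-k$ lie strictly in $(0, 1)$.

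Next I would rewrite the iteration explicitly. Since $s_{m+1} = (I-W) s_m$, we have $s_{m+1} - s_m = -(I-W)^{m-1} W s$, and the unitary invariance of $\|\cdot\|_2$ yields
\begin{equation*}
\|s_{m+1} - s_m\|_2 \;=\; \|(I-D)^{m-1} D\, \widetilde{s}\|_2, \qquad \widetilde{s} := U^T s .
\end{equation*}
The diagonal entries $(1-\lambda_p)^{m-1}\lambda_p$ vanish whenever $\lambda_p \in \{0, 1\}$, so only the $n-1-k$ indices with $\lambda_p \in (0, 1)$ contribute. Applying the scalar maximum $\max_{\lambda \in [0,1]} (1-\lambda)^{m-1} \lambda = (m-1)^{m-1}/m^m$, attained at $\lambda = 1/m$ just as in the proof of Theorem \ref{thm:IF_inner_conv_stopping}, together with the pointwise bound $|\widetilde{s}_p| \leq \|\widetilde{s}\|_\infty$ on each surviving coordinate, leads to
\begin{equation*}
\|s_{m+1} - s_m\|_2 \;\leq\; \frac{(m-1)^{m-1}}{m^m}\, \sqrt{n-1-k}\, \|\widetilde{s}\|_\infty .
\end{equation*}
Because this upper bound is strictly decreasing in $m$, the condition \eqref{eq:N0_discrete} on $N_0$ (with the indexing convention inherited from the continuous theorem) is exactly what is needed to guarantee $\|s_{m+1} - s_m\|_2 < \delta$ for every $m \geq N_0$.

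For the closed form \eqref{eq:IMF1_direct} I would then write $\overline{\textrm{IMF}}_1 = (I-W)^{N_0} s = U(I-D)^{N_0} U^T s$ and introduce $P$ as the permutation matrix that reorders the eigenvalues of $W$ in decreasing order. This places the unique eigenvalue $\lambda_0 = 1$ first (contributing a diagonal $0$ to $(I-D)^{N_0}$), the $n-1-k$ intermediate eigenvalues next (contributing entries $(1-\lambda_p)^{N_0}$), and the $k$ zero eigenvalues last (contributing $1$'s), reproducing exactly the block pattern displayed in the statement.

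The step I expect to demand the most care is the sharp accounting that produces the factor $\sqrt{n-1-k}$ rather than $\sqrt{n}$: one must invoke the orthogonality of the eigenspaces of the symmetric matrix $W$ to ensure that the coordinates of $\widetilde{s}$ associated with the trivial eigenvalues $\lambda_p \in \{0,1\}$ are dropped from the $\ell^2$ sum bounding $\|s_{m+1}-s_m\|_2$, while those same coordinates must still survive (through the $1$'s in $(I-D)^{N_0}$) in the final expression for the IMF. Beyond this bookkeeping, the argument is a direct discrete transcription of the continuous proof.
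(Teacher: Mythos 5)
Your proposal is correct and follows essentially the same route as the paper's proof: diagonalize $W=UDU^T$, use unitary invariance of $\|\cdot\|_2$ to reduce $\|s_{m+1}-s_m\|_2$ to a diagonal expression, bound the scalar $(1-\lambda)^{m}\lambda$ by its maximum, observe that only the $n-1-k$ eigenvalues in $(0,1)$ contribute (which is what yields the factor $\sqrt{n-1-k}$), and then read off \eqref{eq:IMF1_direct} via the permutation $P$. The only discrepancy is a harmless off-by-one in the exponent (your $(m-1)^{m-1}/m^m$ versus the paper's $m^m/(m+1)^{m+1}$), coming from taking $s_1=s$ as in the pseudocode rather than $s_m=(I-W)^m s$ as the paper's proof implicitly does; since the bound is decreasing in $m$, this does not affect the argument.
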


\begin{proof}
\begin{eqnarray}
 \nonumber \|s_{m+1}-s_m\|_2 &=& \|(I-W)^{m+1}-(I-W)^{m}\|_2=\|U(I-D)^{m}(I-D-I)U^Ts\|_2= \\
  \|(I-D)^{m}(I-D-I)U^Ts\|_2 &=& \|(I-D)^{m}(I-D-I)\widetilde{s}\|_2
\end{eqnarray}
since $U$ is a unitary matrix and where $\widetilde{s}=U^T s$.

Given a permutation matrix $P$ such that the entries of the diagonal $PDP^T$ are the eigenvalues of $W$ in decreasing order of magnitude, starting from $\lambda_0=1$, and assuming that $W$ has $k$ zero eigenvalues, where $k$ is a number in the set $\in\{0,\ 1,\ldots,\ n-1\}$, then
\begin{eqnarray}
    \nonumber \|(I-D)^{m}(I-D-I)\widetilde{s}\|_2 & \leq &  \left\|P\left[
                                          \begin{array}{ccccccc}
                                             0 &   &   &   &   &   &   \\
                                              &  (1-\lambda_1 )^m \lambda_1 &   &   &   &   &   \\
                                              &   & \ddots  &   &   &   &   \\
                                              &   &   &  (1-\lambda_{n-1-k} )^m \lambda_{n-1-k} &   &   &   \\
                                              &   &   &   &  0 &   &   \\
                                              &   &   &   &   &  \ddots &   \\
                                              &   &   &   &   &   & 0  \\
                                          \end{array}
                                        \right]P^T\left[
                                                 \begin{array}{c}
                                                   \|\widetilde{s}\|_\infty \\
                                                   \vdots \\
                                                   \|\widetilde{s}\|_\infty \\
                                                 \end{array}
                                               \right]
                                        \right\|_2 \\
  &\leq & {\sqrt{n-1-k}} \left(1-\frac{1}{m+1} \right)^m \frac{1}{m+1}  \|\widetilde{s}\|_\infty = {\sqrt{n-1-k}} \frac{m^m}{(m+1)^{m+1}}  \|\widetilde{s}\|_\infty
\end{eqnarray}
because the function $(1-\lambda)^m \lambda$ achieves its maximum at $\lambda=\frac{1}{m+1}$ for $\lambda\in[0,\ 1]$.

Hence the stopping criterion \eqref{eq:Discrete_Abs_StopCond} is fulfilled for $N_0$ minimum natural number such that \ref{eq:N0_discrete} holds true.
\end{proof}

We observe that, as we mentioned earlier, since (\ref{eq:Lambdas}) is also the Discrete Fourier Transform (DFT) formula of the sequence $\{c_{1q}\}_{q=0,\ldots,n-1}$, it follows that the eigenvalues of $W=\left[w_{pq}\right]_{p,\ q=0,\ldots,n-1}$,  can be computed directly as the DFT of the sequence $\{w_{1q}\}_{q=0,\ldots,n-1}$, by means of the Fast Fourier Transform (FFT). This calculation can be done ``off line'', in fact, once the filter shape $w$ has been fixed, we can compute and store its FFT for different values of the size of its support. This fact, together with other previous results, can be used to improve the efficiency of the method as explained in the following section.

It is interesting to notice that each IMF is generated as a linear combination of elements in an orthonormal basis. Therefore we can regard the IMFs as elements of a frame which allows to decompose a given signal into a few significant components. From this prospective the IF algorithm can be viewed as a method that automatically produces elements of a frame associated with a signal. The possible connections between IF and the frame theory are fascinating, but out of the scope of the present work. We plan to follow this direction of research in a future work.

Regarding the DIF outer loop convergence they hold true the same results described in Section \ref{subsec:outerLoopIF} for the continuous setting. In fact, while the inner loop of the IF algorithm requires a discretization to deal with discrete signals, the outer loop does not require any form of discretization and it works the very same as in the continuous setting.

\section{Efficient implementation of the DIF algorithm}\label{sec:speedup}

In this section we want to review some ideas for an efficient implementation of the DIF algorithm applied to the decomposition of a signal $s$ of length $n$.
We underline that the following ideas apply only for periodical extension of the signal at the boundaries.

We start from Theorem \ref{thm:DIF_conv_stop} which allows to compute each IMF as fast as the FFT of a signal of length $n$.
The first idea is to precompute the number of iterations needed to achieve the required accuracy $\delta$ in the computation of a certain IMF. This number of iterations can be approximated by the minimum $N_0\in\N$ satisfying the inequality \eqref{eq:N0_discrete}. Then we can compute the IMF using \eqref{eq:IMF1_direct} where the eigenvalues $\{\lambda_k\}_{k=1,\ 2,\ldots,\ n}$ can be evaluated using \eqref{eq:Lambdas}, or by means of the Fast Fourier Transform since \eqref{eq:Lambdas} is equivalent to the Discrete Fourier Transform of the sequence $\{w_{1q}\}_{q=0,\ldots,n-1}$. Furthermore we recall that $U^Ts$ is the DFT of $s$ that can be computed using the FFT algorithm, whose computational complexity is $n\log(n)$, and that multiplying on the left by the matrix $U$ is equivalent to computing the Inverse DFT (IDFT) which can be done using the inverse FFT. Hence the IMF can be computed in one step as
\begin{equation}\label{eq:One step_algo}
    \textrm{IMF} = \sum_{k=0}^{n-1} u_k (1-\lambda_k)^{N_0}\sigma_k = \textrm{IDFT}\left((I-D)^{N_0}\textrm{DFT}(s)\right)
\end{equation}
where $\sigma_k$ represents the $k$-th element of the DFT of the signal $s$.

The proposed a priori calculation of $N_0\in\N$ as the minimum value satisfying the inequality \eqref{eq:N0_discrete} is fast and easy, but provides only with an overestimation of the real number of iterations required. In order to compute the actual number of iterations required we can compute \eqref{eq:One step_algo} for subsequently bigger values of $N_0\in\N$ and stop whenever the quantity $SD$ defined in \eqref{eq:SD} is less or equal to $\delta$.
This is done in the so called Fast Iterative Filtering (FIF) method implemented for Matlab and available online\footnote{\url{www.cicone.com}}. By exploiting the FFT we speed up the calculations significantly. For a vector of tenths of millions of points the computational time passes from roughly two days for the standard IF to less than an hour on a personal computer with the FIF algorithm.

We point out that we can also precompute the eigenvalues $\lambda_k$ corresponding to any possible scaling of a filter $w$. In doing so we can reduce even further the computational time of the algorithm.

\section{Conclusions and Outlook}\label{sec:Conclusions}

In this work we tackle the problem of a complete analysis of the IF algorithm both in the continuous and discrete setting. In particular in the continuous setting we show how IF can decompose a signal into a finite number of so called IMFs and that each IMF contains frequencies of the original signal filtered in a ``smart way''.

In the discrete setting we prove that the DIF method is also convergente and, in the case of periodical extension at the boundaries of the given signal, we provide an explicit formula for the a priori calculation of each IMF. From this equation it follows that each IMF is a smart summation of eigenvectors of a circulant matrix.

We show that no fake oscillations can be produced neither in the continuous nor in the discrete setting.

From the properties of the DIF algorithm and the explicit formula for the IMFs produced by this method and derived in this work, we propose new ideas that has been directly incorporated in the implemented algorithm in order to increase its efficiency and reduce its computational complexity. The result is the so called FIF method\footnote{\url{www.cicone.com}} which allows to quickly decompose a signal by means of the FFT. This is an important result in this area of research which opens the doors to an almost instantaneous analysis of non stationary signals.

There are several open problems that remain unsolved. First of all from the proposed analysis it is clear that different filter functions have different Fourier transform and hence the decomposition produced by IF and DIF algorithms is directly influenced by this choice. In a future work we plan to study more in details the connections between the shape of the filters and the quality of the decomposition produced by these methods.

In the current work we analyzed the DIF assuming a periodical extension of the signals at the boundaries. We plan to study in a future work the behavior of the DIF method in the case of reflective, antireflective and other boundaries extensions of a signal.

Based on the numerical evidence \cite{cicone2016adaptive,cicone2017multidimensional} we claim that the Iterative Filtering method is stable under perturbations of the signal. We plan to study rigorously such stability in a future work.

The results about the DIF algorithm convergence suggest that the method allows, in general, to automatically generate a frame associated with a given signal. We plan to further analyze this connection in a future work.

Finally we recall that it is still an open problem how to extend all the results obtained for the Iterative Filtering technique to the case of the Adaptive Local Iterative Filtering method, whose convergence and stability analysis is still under investigation \cite{cicone2017spectral,cicone2016adaptive,cicone2017Geophysics}.


\section*{Acknowledgments}
This work was supported by NSF Awards DMS--1620345, DMS--1830225, ONR Award N00014--18--1--2852, the Istituto Nazionale di Alta Matematica (INdAM) ``INdAM Fellowships in Mathematics and/or Applications cofunded by Marie Curie Actions'', FP7--PEOPLE--2012--COFUND, Grant agreement n. PCOFUND--GA--2012--600198.

\end{document}